\DeclareMathOperator*{\s-lim}{s-lim}
\newtheorem{Def}{Definition}
\newtheorem{Ass}[Def]{Assumption}
\newtheorem{Thm}[Def]{Theorem}
\newtheorem{Lem}[Def]{Lemma}
\newtheorem{Prop}[Def]{Proposition}
\theoremstyle{definition}
\newtheorem{Rmk}{Remark}
\newtheorem{Exm}[Rmk]{Example}
\newcommand{\maru}[1]{\underline{#1}}
\newcommand{\del}{\partial}
\title{Existence of wave operators with time-dependent modifiers for the Schr\"{o}dinger equations with long-range potentials on scattering manifolds}
\author{Shinichiro ITOZAKI\footnote{
Graduate School of Mathematical Sciences,
University of Tokyo.
3-8-1 Komaba, Meguro-ku
Tokyo, 153-8914
Japan.
E-mail: randy@ms.u-tokyo.ac.jp.
This work was partly supported by Grant-in-Aid for JSPS Fellows (No. 09J06551)}}
\date{January 4, 2012}
\begin{document}

\maketitle

\begin{abstract}
We construct time-dependent wave operators for Schr\"{o}dinger equations with long-range potentials 
on a manifold $M$ with asymptotically conic structure.
We use the two space scattering theory formalism,  
and a reference operator on a space of the form 
$\mathbb{R} \times \del M$,
where $\del M$
is the  boundary of $M$ at infinity.
We construct exact solutions to the Hamilton-Jacobi equation on the reference system $\mathbb{R} \times \del M$,
and prove the existence of the modified wave operators.
\end{abstract}

\section{Introduction}

In this paper,
we show the existence of wave operators 
for the Sch\"{o}dinger equations with long-range potentials on scattering manifolds, which have asymptotically conic structure at infinity (see Melrose \cite{M95} about scattering manifolds).
We employ the formulation of Ito-Nakamura~\cite{IN10}, which uses the two-space scattering framework of Kato \cite{Ka67}.
Following\\ H\"{o}rmander \cite{Ho76} and Derezi\'{n}ski and G\'{e}rard \cite{DG97}, we construct exact solutions to the Hamilton-Jacobi equation
and show the existence of the  modified two-space wave operators
using the stationary phase method.

Let $M$ be an $n$-dimensional smooth non-compact manifold  such that $M$ is decomposed to $M_{C}\cup M_{\infty}$,
where $M_{C}$ is relatively compact,
and $M_{\infty}$ is diffeomorphic to $\mathbb{R}_{+}\times \del M$ with a compact manifold $\del M$.
We fix an identification map:
\begin{equation*}
\iota : M_{\infty} \longrightarrow \mathbb{R}_{+} \times \partial M \ni (r, \theta).
\end{equation*}
We suppose $M_{c} \cap M_{\infty} \subset (0, 1/2)\times \partial M$ under this identification.
We also suppose that $\partial M$ is equipped with
a measure $H(\theta) d\theta$
where $H(\theta)$ is a smooth positive density
.

 Let $\{ \phi_{\lambda}: U_{\lambda} \to \mathbb{R}^{n-1} \} ,
U_{\lambda} \subset \partial M$,
be a local coordinate system of $\partial M$.
We set $\{ \tilde{\phi}_{\lambda} : \mathbb{R}_{+} \times U_{\lambda} \to \mathbb{R}\times\mathbb{R}^{n-1} \}$
to be a local coordinate system of $M_{\infty} \cong \mathbb{R}_{+} \times \partial M$,
and we denote $(r, \theta) \in \mathbb{R} \times \mathbb{R}^{n-1}$
to represent a point in $M_{\infty}$.
We suppose $G(x)$ is a smooth positive density on $M$ such that
\begin{equation*}
G(x)dx = r^{n-1}H(\theta)drd\theta \  \text{on} \ (\frac{1}{2}, \infty) \times \partial M \subset M_{\infty},
\end{equation*}
and we set
\begin{equation*}
\mathcal{H} = L^{2}(M, G(x)dx).
\end{equation*}
Let $P_{0}$ be a formally self-adjoint second order elliptic operator on $\mathcal{H}$ of the form:
\begin{equation*}
P_{0} = -\frac{1}{2}G^{-1}(\partial_{r}, \partial_{\theta}/r) G 
\begin{pmatrix}
1 + a_{1} 	& a_{2} \\
^{t}a_{2} 	& a_{3}
\end{pmatrix}
\begin{pmatrix}
\partial_{r}	\\
\partial_{\theta}/r
\end{pmatrix}
\ \ \text{on} \ \tilde{M}_{\infty} = (1, \infty) \times \del M
\end{equation*}
where $a_{1}, a_{2},$ and $ a_{3}$ are real-valued smooth tensors.

\begin{Ass}
For any $l \in \mathbb{Z}_{+}$, $\alpha \in \mathbb{Z}_{+}^{n-1}$, there is $C_{l, \alpha}$ such that
\begin{gather*}
|\partial_{r}^{l}\partial_{\theta}^{\alpha}a_{j}(r, \theta ) | \leq C_{l, \alpha} r^{-\mu_{j}-l}			
\end{gather*}
on $\tilde{M}_{\infty}$, where $\mu_{j} \geq 0$.
Note that we use the coordinate system in $M_{\infty}$ described above.
\end{Ass}

We will construct a time-dependent scattering theory for $P_{0}+V$ on $\mathcal{H}$ where
$V$ is a potential.
\begin{Def}
Let $\mu_{s} >0$.
A finite rank differential operator $V^{S}$ of the form 
$V^{S} = \sum_{l, \alpha} V_{l, \alpha}^{S}(r,\theta) \partial_{r}^{l}\partial_{\theta}^{\alpha}$  on  $M_{\infty}$
is said to be a short range perturbation of $\mu_{S}$ type
if for every $l, \alpha$ the coefficient $V_{l, \alpha}^{S}$ is a $L^{2}_{loc}$ tensor and satisfies
\begin{equation*}
\int_{\mathbb{R}_{+} \times U_{\lambda}}
|V_{l, \alpha}^{S}(x)|^{2}\langle r \rangle^{-M} G(x)dx	 < \infty \\
\end{equation*}
for some $M$, and almost every $(\rho_{0}, \theta_{0}) \in \mathbb{R}\times \partial M $ has 
a neighborhood $\omega_{\rho_{0}, \theta_{0}}$
 such that
\begin{equation*}
\int_{1}^{\infty}(\int_{(\rho, \theta) \in \omega_{\rho_{0}, \theta_{0}}}|V_{l, \alpha}^{S}(t\rho, \theta)|^{2}
d\rho H(\theta ) d\theta ) ^{1/2} t^{\mu_{S} |\alpha|}dt < \infty .
\end{equation*}

Let
$\mu_{L} > 0$.
$V^{L}$ is called a long-range smooth potential if $V^{L}$ is a real-valued $C^{\infty}$ function with support in $\tilde{M}_{\infty}$,
and satisfies for any indices $l, \alpha$, 
\begin{equation*}
|D_{r}^{j}D_{\theta}^{\alpha}V^{L}(r, \theta )|\leq C_{j, \alpha} r^{-\mu_{L} -j}.
\end{equation*}

A differential operator $V$ on $M$ is called an admissible long-range perturbation of $P_{0}$ if 
$V$ is of the form $V = V^{S} + V^{L}$ 
where $V^{S}$ is a short range perturbation of $\mu_{S}$ type and $V^{L}$ is a long-range smooth potential
and 
\begin{gather*}
\epsilon = \mu_{1} = \mu_{2} = \mu_{L} > 0,\ \ 
\mu_{3} = 0,\ \ 
\mu_{S} = 1- \epsilon .
\end{gather*}
\end{Def}

\begin{Exm}
If $V^{S} = V^{S}(r, \theta)$ is a multiplication operator and $|V^{S}(r, \theta)| \leq C r^{-1-\eta}$, $\eta > 0$, then $V^{S}$ satisfies the short-range condition above.

If $V^{S} = \sum_{|\alpha| = 1}V_{\alpha}^{S}\del_{\theta}^{\alpha}$ and $|V_{\alpha}^{S}(r, \theta)| \leq C r^{-1-\mu_{S} -\eta}, \eta > 0$, then $V^{S}$ satisfies the short-range condition above. As the order of the derivative with respect to $\theta$-variable increases, we need more rapid decay conditions on the coefficients.
\end{Exm}

\begin{Rmk}
If $V^{S}$ is a smooth function, then $P_{0} + V$ is essentially self-adjoint.
More generally, if $V^{S}$ is at most second-order differential operator with ``small'' smooth coefficients, then $V^{S}$ is $P_{0}$-bounded with relateve bound less than one, and $P_{0} + V$ is essentially self-adjoint.
In this paper, we assume that $P_{0} + V$ is essentially self-adjoint on suitable domains (see Theorem \ref{Wave_MainThm}) and do not investigate the conditions of self-adjointness.
\end{Rmk}

\begin{Rmk}
If we assume $\del M$ is equipped with a positive (2, 0)-tensor $h = (h^{jk}(\theta))$, for some $\epsilon > 0$,
\begin{gather*}
|\del_{r}^{l} \del_{\theta}^{\alpha}(a_{3}(r, \theta) - h(\theta) ) | \leq C_{l, \alpha} r^{ - \epsilon - l},
\end{gather*} 
and $V^{S} = 0$, then $P_{0} + V $ has a self-adjoint extension $H$ and corresponds (via a unitary equivalence) to the Laplacian on Riemannian manifolds with asymptotically conic structure. Since $\epsilon > 0$, our model includes the scattering metric of long-range type described in \cite{IN09}. Thus our results are generalizations of \cite{IN10}.
\end{Rmk}

We prepare a reference system as follows:
\begin{gather*}
M_{f} = \mathbb{R}\times \partial M,\ \  \mathcal{H}_{f} = L^{2}(M_{f}, H(\theta)dr d\theta ),\ \ 
P_{f} = -\frac{1}{2}\frac{\partial^{2}}{\partial r^{2}} \ \text{on} \ M_{f}
\end{gather*}
Note that $P_{f}$ is essentially self-adjoint on $C_{0}^{\infty}(M_{f})$,
and we denote the unique self-adjoint extension by the same symbol.
Let $j(r) \in C^{\infty}(\mathbb{R})$ be a real-valued function such that
$j(r) = 1$ if $r \geq 1$ and $j(r)=0$ if $r\leq 1/2$.
We define the identification operator $J: \mathcal{H}_{f} \to \mathcal{H}$ by
\begin{equation*}
(Ju)(r,\theta)= r^{-(n-1)/2}j(r)u(r, \theta) \ \ \text{if}\ \  (r,\theta) \in M_{\infty}
\end{equation*}
and $Ju(x) = 0$ if $x \notin M_{\infty}$, where $u\in \mathcal{H}_{f}$.
We denote the Fourier transform with respect to $r$-variable by $\mathcal{F}$:
\begin{equation*}
\mathcal{F}u(\rho,\theta) = \int_{-\infty}^{\infty}e^{-ir\rho}u(r, \theta)dr,
\ \  \text{for}\ \  u \in C_{0}^{\infty}(M_{f}).
\end{equation*}
We decompose the reference Hilbert space $\mathcal{H}_{f}$ as 
$\mathcal{H}_{f} = \mathcal{H}_{f}^{+}\oplus \mathcal{H}_{f}^{-}$,
where $\mathcal{H}_{f}^{\pm}$ are defined by
\begin{gather*}
\mathcal{H}_{f}^{+} \ = \ \{ u \in \mathcal{H} | \text{supp}(\mathcal{F}u) \subset [0, \infty) \times \partial M\},		\\
\mathcal{H}_{f}^{-} \ = \ \{ u \in \mathcal{H} | \text{supp}(\mathcal{F}u) \subset (-\infty, 0] \times \partial M\}.
\end{gather*}
We use the following notation throughout the paper:
For $x \in M$, we write
\begin{equation*}
\langle x \rangle = \langle r \rangle = 
\begin{cases}
1 + rj(r)  	&\text{for} \ x \in M_{\infty},\\
1		&\text{for} \ x \in M_{c}	.
\end{cases}
\end{equation*}

We state our main theorem.

\begin{Thm}\label{Wave_MainThm}
Let $V = V^{L}+V^{S}$ be an admissible long-range perturbation of $P_{0}$,
and $V$ is symmetric on $J\mathcal{F}^{-1}C_{0}^{\infty}(M_{f})$,
and  $P_{0} + V$ has a self adjoint extension $H$.
Let $S(t, \rho , \theta)$ be a solution to the Hamilton-Jacobi equation which is constructed in Theorem  \ref{SThmGlobal}.
Then the modified wave operators
\begin{equation*}
\Omega_{\pm} = \s-lim_{t\to \pm \infty}e^{itH} J e^{-iS(t, D_{r}, \theta )}
\end{equation*}
exist, and are partial isometries from $\mathcal{H}_{f}^{\pm}$ into $\mathcal{H}$
intertwining $H$ and $P_{f}$:
\begin{equation*}
e^{i s H}\Omega_{\pm} = \Omega_{\pm} e^{i s P_{f}}.
\end{equation*}
\end{Thm}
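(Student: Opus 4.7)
The plan is to apply Cook's method on a dense subset of $\mathcal{H}_f^\pm$, using the Hamilton-Jacobi equation satisfied by $S$ to cancel the leading-order terms in the time derivative and the stationary phase method to localize the modified free evolution so that the remaining error terms become integrable in $t$.

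First, since $\mathcal{F}^{-1} C_0^\infty((\mathbb{R}_\pm \setminus \{0\}) \times \partial M)$ is dense in $\mathcal{H}_f^\pm$ and the family $\{e^{itH} J e^{-iS(t,D_r,\theta)}\}_t$ is uniformly bounded (the modifier is unitary on $\mathcal{H}_f$ and $\|J\| < \infty$), a standard $\varepsilon/3$ argument reduces the existence of the strong limit to its existence on this dense set. Fix $u = \mathcal{F}^{-1} \varphi$ with $\varphi$ supported in $I \times K$ for compact $I \subset \mathbb{R}_\pm \setminus \{0\}$ and $K \subset \partial M$, and set $\Phi(t) = e^{itH} J e^{-iS(t, D_r, \theta)} u$. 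Then
\begin{equation*}
\Phi'(t) = i e^{itH} \bigl( HJ - J\,(\partial_t S)(t, D_r, \theta) \bigr) e^{-iS(t, D_r, \theta)} u ,
\end{equation*}
so by Cook's lemma it suffices to prove $\int^{\pm\infty} \|\Phi'(t)\|_{\mathcal{H}}\, dt < \infty$.

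Next, compute the full symbol of $J^{-1}(P_0+V)J$ on $\tilde M_\infty$ using conjugation by $r^{-(n-1)/2}j(r)$ together with the measure identity $G(x)dx = r^{n-1}H(\theta)drd\theta$. Modulo commutators supported where $\nabla j \neq 0$ (which act trivially on wave packets escaping to infinity), the leading symbol is $\tfrac{1}{2}\rho^2$, with remainders constructed from $a_1, a_2, a_3$ and $V^L + V^S$. The Hamilton-Jacobi equation provided by Theorem \ref{SThmGlobal} is the statement that this full symbol minus $(\partial_t S)(t,\rho,\theta)$ vanishes along the classical trajectory $r = \partial_\rho S(t,\rho,\theta)$ up to short-range errors. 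To turn this symbol-level cancellation into an $L^2$ estimate, apply the stationary phase method to
\begin{equation*}
v_t(r, \theta) = \frac{1}{2\pi} \int e^{i(r\rho - S(t,\rho,\theta))}\, \varphi(\rho, \theta)\, d\rho .
\end{equation*}
The critical point equation $r = \partial_\rho S(t,\rho,\theta)$ and the asymptotic $\partial_\rho S(t,\rho,\theta) \sim t\rho$ (a consequence of Theorem \ref{SThmGlobal}) show that $v_t$ is microlocally concentrated near $\{r \sim t\rho : \rho \in I\}$. Therefore, after Hamilton-Jacobi cancellation, each residual coefficient built from the $a_j$'s, $V^L$ and $V^S$ evaluated on this localization yields a norm of size $O(\langle t\rangle^{-1-\delta})$ for some $\delta>0$ (using $\mu_S = 1-\epsilon$ and the decay rates in the admissibility assumption), and this bound is integrable, establishing the existence of $\Omega_\pm$.

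Finally, for the partial isometry,
\begin{equation*}
\|\Omega_\pm u\|^2 = \lim_{t\to\pm\infty} \int j(r)^2 |v_t(r,\theta)|^2 H(\theta)\, dr\, d\theta = \|u\|_{\mathcal{H}_f}^2,
\end{equation*}
since $v_t$ concentrates on $|r|\to\infty$ where $j \equiv 1$ while $e^{-iS(t,D_r,\theta)}$ is unitary on $\mathcal{H}_f$. For the intertwining relation, shifting $t \mapsto t+s$ in the defining limit reduces matters to
\begin{equation*}
\lim_{t\to\pm\infty} \bigl\| e^{-iS(t+s,D_r,\theta)} u - e^{-iS(t,D_r,\theta)} e^{isP_f} u \bigr\|_{\mathcal{H}_f} = 0 ,
\end{equation*}
which follows from $S(t+s,\rho,\theta) - S(t,\rho,\theta) - \tfrac{s}{2}\rho^2 \to 0$ uniformly on $\mathrm{supp}\,\varphi$, itself a consequence of the Hamilton-Jacobi equation. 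The main technical obstacle is the simultaneous treatment of the non-decaying metric tensor $a_3$ ($\mu_3 = 0$) and the long-range potential $V^L$: these produce contributions to $\|\Phi'(t)\|$ that are not integrable if handled as short-range perturbations, and they are tamed only because $S$ is constructed so that its full symbol absorbs both of them into the Hamilton-Jacobi equation. Verifying that the residual error after this absorption is $O(\langle t\rangle^{-1-\delta})$ requires the precise symbol-type estimates on the derivatives of $S$ inherited from Theorem \ref{SThmGlobal}.
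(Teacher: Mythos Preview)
Your outline follows the same architecture as the paper's proof: Cook--Kuroda on a dense set, splitting $HJ - J\,\partial_t S$ into short-range pieces $[P_0 J - JP_0] + V^S J + [V^L J - JV^L]$ and the long-range piece $J[P_0 + V^L - \partial_t S]$, stationary phase in $\rho$ on the latter, and the same arguments for isometry and intertwining.

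The gap is in the sentence ``after Hamilton-Jacobi cancellation, each residual coefficient \ldots\ yields a norm of size $O(\langle t\rangle^{-1-\delta})$.'' Writing the long-range piece as
\[
\int \bigl[\tilde h(r,\theta,\rho,-\partial_\theta S) - \tilde h(\partial_\rho S,\theta,\rho,-\partial_\theta S)\bigr]\, e^{i(r\rho - S(t,\rho,\theta))}\,\hat u(\rho,\theta)\,d\rho,
\]
the amplitude in brackets is only $O(t^{-\epsilon})$; a direct stationary-phase bound gives $O(t^{-1/2-\epsilon})$ pointwise, and after taking the $L^2$ norm over $r$ in a region of width $\sim t$ one gets only $O(t^{-\epsilon})$, which is not integrable. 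The missing idea is that the amplitude \emph{vanishes at the critical point} (since $r=\partial_\rho S$ there), so the zeroth-order term in the stationary-phase expansion drops out and one starts at the next order, gaining an additional $t^{-1}$. The paper makes this rigorous by constructing an explicit Morse-lemma diffeomorphism $\psi_{r,\theta,t}$ that straightens the phase to a quadratic, with uniform bounds $|\partial_y^k\psi_{r,\theta,t}|\le Ct^{-\epsilon}$ for $k\ge 2$ coming from the estimates on $S$ in Theorem~\ref{SThmGlobal}; these guarantee the stationary-phase remainder constants are uniform in $r,\theta,t$ and yield the pointwise bound $Ct^{-3/2-\epsilon}$ (Proposition~\ref{LRProp} and Lemma~\ref{psiLem}). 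Your sketch asserts the $O(\langle t\rangle^{-1-\delta})$ conclusion and acknowledges it ``requires the precise symbol-type estimates,'' but does not identify this vanishing-at-the-critical-point mechanism, and without it the integrability claim is unsupported.
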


We refer Reed and Simon \cite{RS72-80}, Derezi\'{n}ski and G\'{e}rard \cite{DG97}, and Yafaev \cite{Ya00} for general concepts of wave operators and scattering theory for Sch\"{o}dinger equations. We here briefly review the history of wave operators.
The concept of wave operator was introduced by M\o ller \cite{Mo45}.
The existence of wave operators has long been studied (see Cook \cite{Co57} and Kuroda \cite{Ku59}) for short range potentials, which decay faster than the Coulomb potential.
For the Coulomb potential, it was proved by Dollard \cite{Do64, Do71} that the wave operators do not exist unless the definition is modified. Dollard introduced the concept of the modified wave operators $\s-lim_{t \to \pm \infty} e^{itH} e^{-iS(t, D_{x})}$. Buslaev-Mateev \cite{BM70} showed the existence of modified wave operators by using stationary phase method and  by employing an approximate solution to the Hamilton-Jacobi equation as a modifier function $S(t, \xi)$. H\"{o}rmander \cite{Ho76} constructed exact solutions to the Hamilton-Jacobi equation (see also \cite{Ho83-85} vol. IV). 

The spectral properties of Laplace operators on a class of non-compact manifolds were studied by Froese, Hislop and Perry \cite{FH89, FHP91}, and Donnelly \cite{D99} using the Mourre theory (see, the original paper Mourre \cite{Mo81}, and Perry, Sigal, and Simon \cite{PSS81}).
In early 1990s, Melrose introduced a new framework of scattering theory on a class of Riemannian manifolds with metrics called scattering metrics (see \cite{M95} and references therein), and showed that the absolute scattering matrix, which is defined through the asymptotic expansion of generalized eigenfunctions, is a Fourier integral operator. Vasy \cite{Va98} studied Laplace operators on such manifolds with long-range potentials of Coulomb type decay ( $|V(r, \theta)| \leq C r^{-1}$).

Ito and Nakamura \cite{IN10} studied a time-dependent scattering theory for \\
Schr\"{o}dinger operators on scattering manifolds. They used  the two-space scattering framework of Kato \cite{Ka67} with a simple reference operator $D_{r}^{2}/2$ on a space of the form $\mathbb{R} \times \del M$, where $\del M$ is the boundary of the scattering manifold $M$.

We employ the formulation of Ito and Nakamura \cite{IN10}, and consider general long-range metric perturbations and potential perturbations. We assume that  the scalar potential decay as $|V(r, \theta)| \leq C r^{- \epsilon}, \epsilon > 0$.

We make some remarks along with the outline of the proof.
The time-\\dependent modifier function $S(t, \rho, \theta)$ is not uniquely determined.
Our choice is a solution to the Hamilton-Jacobi equation on the reference manifold $\mathbb{R} \times \del M$
with the long-range potential $V^{L}$:
\begin{gather}
h(\frac{\del S}{\del \rho}, \theta, \rho, -\frac{\del S}{\del \theta}) = \frac{\del S}{\del t} ,\label{Introduction_Hamilton_Jacobi_eq} \\
h(r, \theta, \rho, \omega) = \frac{1}{2}\rho^{2} + 
\frac{1}{2}a_{1}\rho^{2} + \frac{1}{r}a_{2}^{j}\rho \omega_{j} + \frac{1}{r^{2}}a_{3}^{j k }\omega_{j}\omega_{k} 
+V^{L}, \notag
\end{gather}
for large $t$ and for every $\rho$ in any fixed compact set of $\mathbb{R} \setminus \{ 0 \}$, where $h$ is the corresponding classical Hamiltonian.
We choose $\rho$ and $\theta$ as variables of $S$ because $\rho$ and $\theta$ components of the classical trajectories have limits as $t$ goes to infinity. The time-dependent modifier $e^{-S(t, D_{r}, \theta)}$ is a Fourier multiplier in $r$-variable for each $\theta$ and we only need to consider the $1$-dimensional Fourier transform with respect to $r$-variable.
We construct solutions to the Hamilton-Jacobi equation mainly following J. Derezi\'{n}ski and C. G\'{e}rard \cite{DG97}.

In Section \ref{Classical trajectories with slowly-decaying time-dependent force}, we consider the boundary value problem for Newton equation on $\mathbb{R} \times \del M$ with time-dependent slowly-decaying forces, which decay in time (Definition  \ref{slowly-decaying_force}). 
In Theorem  \ref{E4Neq}, we construct solutions and show several estimates. 
We use an integral equation and Banach's contraction mapping theorem (Proposition  \ref{P:contraction}, refer Derezi\'{n}ski \cite{De91} and Section 1.5 of \cite{DG97}). 
In the definition of slowly-decaying forces (Definition \ref{slowly-decaying_force}) and the function spaces (Definition \ref{Banach_Space_of_functions_Newton}), we assume different decaying rates on different variables $r, \theta, \rho,$ and $\omega$. These are efficiently used to show Proposition \ref{P:contraction}.
We observe that the classical trajectories will stay in outgoing (incoming ) regions as $t \to +\infty ( - \infty)$.

In Section \ref{Classical trajectories with long-range time-independent force}, we consider  Newton equations with time-independent long-range forces￼ which decay in space (Definition  \ref{long-range_force}) in appropriate outgoing (incoming) regions. 
By inserting time-dependent cut-off functions, we introduce an effective time-dependent force and reduce the time-independent problem to the time-dependent one (Theorem  \ref{CTini-fi}).
Our model (the Hamiltonian flow induced by the classical Hamiltonian) turns out to fit into this framework (Lemma  \ref{HtoF}).
These tricks are also used in \cite{DG97} for Hamiltonians with long-range potentials on Euclidean spaces.

Finally, in Section \ref{Solutions of the Hamilton-Jacobi equation}, in Theorem \ref{SThmGlobal} we construct exact solutions to the Hamilton-Jacobi equation, using the classical trajectories with their dependence on initial data. Here we use the idea by H\"{o}rmandor \cite{Ho76}, see also Section 2.7 of \cite{DG97}.
We show that these solutions with their derivatives satisfy ``good estimates'', which are used to show the existence of the modifiers. Once we obtain a suitable modifier $S(t, \rho, \theta)$, we can show the existence of modified wave operators through stationary phase method (Section  \ref{Sec_Proof_of_the_main_Thm}).

Using the Cook-Kuroda method (see Cook \cite{Co57}, and Kuroda \cite{Ku59}) and \\$1$-dimensional Fourier transform, we deduce the proof of the main theorem to estimates of  the integral (Proposition \ref{LRProp}):
\begin{gather*}
\int [h(r, \theta, \rho, -\frac{\del S}{\del \theta}(t, \rho, \theta))
 - h(\frac{\del S}{\del \rho}(t, \rho, \theta), \theta, \rho, -\frac{\del S}{\del \theta}(t, \rho, \theta))]\\
\cdot e^{i r \rho -iS(t, \rho, \theta)} \hat{u}(\rho, \theta) d\rho. \notag
\end{gather*}
In Section \ref{Sec_Proof_of_the_main_Thm}, we apply the stationary phase method (H\"{o}rmander~\cite{Ho83-85} Section 7.7). In the asymptotic expansion of the above integral, the terms in which $h$ is not differentiated vanish since the equation $r = \del S/\del \rho$ holds at the stationary points. To show the uniformly boundedness of constants which appear in the asymptotic expansions of the integral, we construct diffeomorphisms in small neighborhoods of the stationary points which transform the phase function into quadratic forms there (Lemma \ref{psiLem}). In the constructions of these diffeomorphisms, we use the estimates on the modifier function $S$.

\textbf{Notations}
We use the following notation throughout the paper.
Let $t\in \mathbb{R}$ and $s$ be a parameter.
We write $f(t, s) \in  g(s) \mathrm{O}(\langle t \rangle^{-m})$ if $f(t,s) \leq C g(s) \langle t \rangle^{-m}$ uniformly for $t$ and $s$.
We denote $f(t, s) \in g(s)\mathrm{o}(t^{0})$ if 
$\lim_{t \to \infty} f(t, s)/g(s) = 0$.

\textbf{ACKNOWLEDGMENTS}

I would like to express my thank to my advisor, Professor Shu Nakamura
for providing me various supports, comments, and encouragements.

\section{Classical mechanics}
In this section, we study classical trajectories and solutions to the Hamilton-Jacobi equation.

\subsection{Classical trajectories with slowly-decaying time-dependent force}\label{Classical trajectories with slowly-decaying time-dependent force}
Let $(r, \theta, \rho, \omega) \in T^{*}(\mathbb{R} \times \mathbb{R}^{n-1})$
and consider the Newton's equation:
\begin{gather}
(\dot{r}, \dot{\theta}, \dot{\rho}, \dot{\omega})(t) = (\rho + F_{r}, F_{\theta}, F_{\rho}, F_{\omega})(t, (r, \theta, \rho, \omega)(t) ) \label{Newton}
\end{gather}
where
\begin{gather*}
F=F(t, r, \theta, \rho, \omega) = (F_{r}, F_{\theta}, F_{\rho}, F_{\omega})(t, r, \theta, \rho, \omega)
\end{gather*}
is a time-dependent force.
Let $\epsilon >0$
and $\tilde{\epsilon} = \frac{1}{2}\epsilon$.

\begin{Def}\label{slowly-decaying_force}
A time-dependent force $F$ is said to be slow-decaying if $F$ satisfies
\begin{gather}
\sup_{(r, \rho, \theta, \omega) \in T^{*}(\mathbb{R} \times \mathbb{R}^{n-1})} |\del_{r}^{l} \del_{\theta}^{\alpha} \del_{\rho}^{k} \del_{\omega}^{\beta}(F_{r}, F_{\theta}, F_{\rho}, F_{\omega})(t)|
\in \mathrm{O}( \langle t \rangle^{-n_{r, \theta ,\rho ,\omega }(l, \alpha, k, \beta) })
\label{SDFAss}
\end{gather}
where
\begin{gather}
n_{r}(l, \alpha, k, \beta) = m(l, \alpha, k+1, \beta),	\ \ 	
n_{\theta}(l, \alpha, k, \beta) = m(l, \alpha, k, \beta +e_{i}),		\notag \\
n_{\rho}(l, \alpha, k, \beta) = m(l+1, \alpha, k, \beta),		\ \ 
n_{\omega}(l, \alpha, k, \beta) = m(l, \alpha +e_{i}, k, \beta),	\notag \\
m(l, \alpha, 0, 0) = l + \epsilon, \ \ 
 m(l, \alpha, 1, 0) = l + \epsilon,  \ \ 
m(l, \alpha, 2, 0) = l + \epsilon,    \label{index}\\
m(l, \alpha, 0, e_{i}) = l + 1 + \tilde{\epsilon}, \ \ 
m(l, \alpha, 1, e_{i}) = l + 1+ \epsilon,  \notag \ \ 
m(l, \alpha, 0, e_{i} + e_{j}) = l + 2,  \notag \\
m(l, \alpha, k, \beta) = +\infty, \ \ \ \ \ \text{if}\ \ \ \ \  k +|\beta| \geq 3, \notag 
\end{gather}
$i, j = 1, \cdots ,n-1$, and $e_{i} = (0, \cdots , 1,0, \cdots,  0) \in \mathbb{Z}_{+}^{n-1}$ is the canonical unit vector, i.e.,  every component of $e_{i}$ is $0$ except $i$-th component.
\end{Def}

In the next theorem, we show the unique existence of trajectories for the dynamics  \eqref{Newton} where the boundary conditions are the initial position and the final momentum.

\begin{Thm}\label{E4Neq}
Assume that $F$ is a time-dependent slowly-decaying force in the sense of Definition  \ref{slowly-decaying_force}.
Then there exists $T$ such that 
if $T \leq t_{1} < t_{2} \leq \infty$
and 
$(r_{i}, \theta_{f}, \rho_{f}, \omega_{i}) \in T^{*}(\mathbb{R} \times \mathbb{R}^{n-1})$,
there exists a unique trajectory
\begin{gather*}
[t_{1}, t_{2}] \ni s \mapsto (\tilde{r}, \tilde{\theta}, \tilde{\rho}, \tilde{\omega})(s, t_{1}, t_{2}, r_{i}, \theta_{f}, \rho_{f}, \omega_{i})
\end{gather*}
satisfying
\begin{gather*}
\del_{s}(\tilde{r}, \tilde{\theta}, \tilde{\rho}, \tilde{\omega})(s, t_{1}, t_{2}, r_{i}, \theta_{f}, \rho_{f}, \omega_{i}) \\
= (\rho + F_{r}, F_{\theta}, F_{\rho}, F_{\omega})(s, (\tilde{r}, \tilde{\theta}, \tilde{\rho}, \tilde{\omega})(s, t_{1}, t_{2}, r_{i}, \theta_{f}, \rho_{f}, \omega_{i}) ),\\
(\tilde{r}, \tilde{\omega})(t_{1}, t_{1}, t_{2}, r_{i}, \theta_{f}, \rho_{f}, \omega_{i})
= (r_{i}, \omega_{i}), \ \ 
(\tilde{\theta}, \tilde{\rho})(s, t_{1}, t_{2}, r_{i}, \theta_{f}, \rho_{f}, \omega_{i})
= (\theta_{f}, \rho_{f})    .
\end{gather*}
We set $\maru{r}(s), \maru{\theta}(s), \maru{\rho}(s), \maru{\omega}(s)$ by
\begin{gather*}
\maru{r}(s)		=\tilde{r}(s) - r_{i} - (s-t_{1}) \rho_{f}, \ \ 
\maru{\theta}(s)=\tilde{\theta}(s) - \theta_{f},\\
\maru{\rho}(s)	=\tilde{\rho}(s) - \rho_{f}, \ \ 
\maru{\omega}(s)=\tilde{\omega}(s) - \omega_{i}.
\end{gather*}
Moreover the solution satisfies the following estimates uniformly for \\
$T \leq t_{1} \leq s \leq t_{2} \leq \infty$, 
$(r_{i}, \theta_{f}, \rho_{f}, \omega_{i}) \in T^{*}(\mathbb{R} \times \mathbb{R}^{n-1})$ :
\begin{gather}
\begin{array}{l}
|\maru{r}(s)| \in \mathrm{o}(s^{0}) |s-t_{1}|,  	\ \
|\maru{\theta}(s)| \in \mathrm{O}(s^{-\tilde{\epsilon}}),			\\
|\maru{\rho}(s)| \in \mathrm{O}(s^{-\tilde{\epsilon}}),			\ \ 
|\maru{\omega}(s)| \in \mathrm{o}(s^{0}) |s-t_{1}|^{1-\tilde{\epsilon}} ,
\end{array}
\label{0thderivative}\\
\left(
\begin{array}{cccc}
\del_{r_{i}}\maru{r}(s)			&
\del_{\theta_{f}}\maru{r}(s)	&
\del_{\rho_{f}}\maru{r}(s)		&
\del_{\omega_{i}}\maru{r}(s)	\\
\del_{r_{i}}\maru{\theta}(s)			&
\del_{\theta_{f}}\maru{\theta}(s)	&
\del_{\rho_{f}}\maru{\theta}(s)		&
\del_{\omega_{i}}\maru{\theta}(s)	\\
\del_{r_{i}}\maru{\rho}(s)			&
\del_{\theta_{f}}\maru{\rho}(s)	&
\del_{\rho_{f}}\maru{\rho}(s)		&
\del_{\omega_{i}}\maru{\rho}(s)	\\
\del_{r_{i}}\maru{\omega}(s)			&
\del_{\theta_{f}}\maru{\omega}(s)	&
\del_{\rho_{f}}\maru{\omega}(s)		&
\del_{\omega_{i}}\maru{\omega}(s)	
\end{array}
\right)			\notag \\
\in
\left(
\begin{array}{c}
\mathrm{o}(s^{0}) |s-t_{1}|		\\
\mathrm{O}(1)								\\
\mathrm{O}(s^{-\tilde{\epsilon}})			\\
\mathrm{o}(s^{0}) |s-t_{1}|^{1-\tilde{\epsilon}}
\end{array}
\right) \otimes
(t_{1}^{-1-\tilde{\epsilon}},
t_{1}^{- \tilde{\epsilon}},
t_{1}^{-\tilde{\epsilon}},
t_{1}^{-1}
),
\label{1stderivative}\\
\del_{r_{i}}^{l} \del_{\theta}^{\alpha} \del_{\rho}^{k} \del_{\omega}^{\beta}
\left(
\begin{array}{c}
\maru{r}\\
\maru{\theta}\\
\maru{\rho}\\
\maru{\omega}
\end{array}
\right)
\in
\left(
\begin{array}{c}
\mathrm{o}(s^{0}) |s-t_{1}|		\\
\mathrm{O}(1)								\\
\mathrm{O}(s^{-\tilde{\epsilon}})			\\
\mathrm{o}(s^{0}) |s-t_{1}|^{1-\tilde{\epsilon}}
\end{array}
\right) \cdot
t_{1}^{-l-|\beta |}.
\label{nthderivative}
\end{gather}
Here $\otimes$ is an outer product and
\eqref{1stderivative} means, for example, $\del_{r_{i}}\maru{r}(s) \in \mathrm{o}(s^{0})|s-t_{1}| t_{1}^{-1-\tilde{\epsilon}}$, and $\del_{\theta_{f}}\maru{\theta}(s) \in \mathrm{O}(1) t_{1}^{-\tilde{\epsilon}}$. 
\end{Thm}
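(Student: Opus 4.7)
The plan is to reformulate the boundary-value problem \eqref{Newton} as a fixed-point problem for an integral operator and apply the contraction mapping theorem announced in the paper as Proposition \ref{P:contraction}. Since $(r, \omega)$ are prescribed at $s = t_1$ and $(\theta, \rho)$ at $s = t_2$, subtracting the natural reference trajectory $s \mapsto (r_i + (s-t_1)\rho_f,\, \theta_f,\, \rho_f,\, \omega_i)$ and integrating Newton's equation yields the system
\begin{gather*}
\maru{r}(s) = \int_{t_1}^{s} \bigl(\maru{\rho}(u) + F_r(u, \tilde{r}, \tilde{\theta}, \tilde{\rho}, \tilde{\omega})(u)\bigr)\, du,\quad
\maru{\omega}(s) = \int_{t_1}^{s} F_\omega(u, \cdot)\, du, \\
\maru{\theta}(s) = -\int_{s}^{t_2} F_\theta(u, \cdot)\, du,\quad
\maru{\rho}(s) = -\int_{s}^{t_2} F_\rho(u, \cdot)\, du,
\end{gather*}
and a fixed point of the corresponding map $\Phi$ is precisely the desired trajectory.

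Next I would introduce a Banach space $X$ of continuous quadruples on $[t_1, t_2]$ with weighted sup-norms chosen to match the asserted bounds \eqref{0thderivative}: weight $\langle s \rangle^{\tilde\epsilon}$ on $\maru{\theta}$ and $\maru{\rho}$, weight $|s-t_1|^{-1}$ on $\maru{r}$, and weight $|s-t_1|^{\tilde\epsilon-1}$ on $\maru{\omega}$. The hypothesis \eqref{SDFAss} with the indices \eqref{index} supplies exactly the decays $F_r, F_\rho \in \mathrm{O}(\langle t\rangle^{-1-\epsilon})$ and $F_\theta, F_\omega \in \mathrm{O}(\langle t\rangle^{-1-\tilde\epsilon})$ at the leading level, so that after performing the integrals the image of $\Phi$ lands in a small ball of $X$ and the Jacobian of $\Phi$ with respect to its argument is small, once $T$ is taken large enough. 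Banach's fixed-point theorem then yields the unique trajectory together with the zeroth-order estimates \eqref{0thderivative}.

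For the derivative estimates \eqref{1stderivative} and \eqref{nthderivative} I would differentiate the integral equations in $(r_i, \theta_f, \rho_f, \omega_i)$. The first derivatives satisfy a linear inhomogeneous system of the same mixed type, whose kernel is built from $(\partial_r F, \partial_\theta F, \partial_\rho F, \partial_\omega F)$ evaluated along the trajectory, decaying at the improved rates encoded in \eqref{index}, and whose inhomogeneities record the explicit dependence of the reference trajectory on the boundary data. The factors $t_1^{-1-\tilde\epsilon}, t_1^{-\tilde\epsilon}, t_1^{-\tilde\epsilon}, t_1^{-1}$ in \eqref{1stderivative} arise by tracking which boundary datum is being differentiated: for example $\partial_{\rho_f}\maru{r}$ picks up a contribution of size $|s-t_1|$ from $\partial_{\rho_f}\tilde{\rho} = 1 + \partial_{\rho_f}\maru{\rho}$, but paired against the decay of $\partial_\rho F_r$ it produces the claimed $\mathrm{o}(s^0)|s-t_1|\, t_1^{-\tilde\epsilon}$. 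The higher-order bounds \eqref{nthderivative} then follow by induction on $l + |\alpha| + k + |\beta|$, applying Faà di Bruno's formula to the composition $F\circ (\tilde{r}, \tilde{\theta}, \tilde{\rho}, \tilde{\omega})$; at each step the top-order derivative satisfies a linear integral equation of the same contraction type, with an inhomogeneity controlled by the induction hypothesis.

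The main obstacle is the delicate bookkeeping of the weights. The mixed boundary conditions force $F_r$ and $F_\omega$ to be integrated from $t_1$ up to $s$, so the length $|s - t_1|$ can be as large as $t_2 - t_1$; one therefore has no room to waste in the $\epsilon$-gain supplied by \eqref{index}. The bound $\maru{r} \in \mathrm{o}(s^0)|s-t_1|$, for instance, says that the integrand $\maru{\rho} + F_r$ must be integrable in $u$ with little-$\mathrm{o}$ control, which is exactly what the weight $\langle u\rangle^{\tilde\epsilon}$ on $\maru{\rho}$ together with the exponent $l+\epsilon$ on $F_r$ provides, since $\tilde\epsilon < \epsilon$. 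Verifying that each of the indices $n_r, n_\theta, n_\rho, n_\omega$ in \eqref{index} is consistent with both the contraction step and every derivative estimate, without losing a power of $t_1$ or $\langle s\rangle$, is the technical heart of the proof; once this matching is in place the remainder of the argument is routine.
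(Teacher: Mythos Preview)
Your approach is essentially identical to the paper's: the same integral-equation reformulation, the same contraction argument in a weighted space (this is exactly the content of Definition \ref{Banach_Space_of_functions_Newton} and Proposition \ref{P:contraction}), and the same scheme of differentiating the fixed-point equation, inverting $(I-\nabla_x P)$, and proceeding by induction on the order of the derivative. One small slip to correct: at zeroth order the indices \eqref{index} give $|F_r|,|F_\omega|\in\mathrm{O}(\langle t\rangle^{-\epsilon})$ while $|F_\rho|\in\mathrm{O}(\langle t\rangle^{-1-\epsilon})$ and $|F_\theta|\in\mathrm{O}(\langle t\rangle^{-1-\tilde\epsilon})$, not the pairing you stated; your later remark about ``the exponent $l+\epsilon$ on $F_r$'' is the correct reading, and with that fix the bookkeeping goes through exactly as you describe.
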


A straightforward computation shows that $(\maru{r}, \maru{\theta}, \maru{\rho}, \maru{\omega})(s)$
satisfies the following integral equation:

\begin{gather}
(\maru{r}, \maru{\theta}, \maru{\rho}, \maru{\omega})(s)
=(P_{r}, P_{\theta}, P_{\rho}, P_{\omega})(\maru{r}, \maru{\theta}, \maru{\rho}, \maru{\omega})(s)	\label{integral_eq}:=\\
\left(
\begin{array}{c}
\int_{t_{1}}^{s} \bigl( \maru{\rho}(u) + F_{r}(u, r_{i}+(u-t_{1})\rho_{f}+\maru{r}(u), \theta_{f}+\maru{\theta}(u), \rho_{f}+\maru{\rho}(u), \omega_{i}+\maru{\omega}(u) ) \bigr)du\\
-\int_{s}^{t_{2}} \bigl(  F_{\theta}(u, r_{i}+(u-t_{1})\rho_{f}+\maru{r}(u), \theta_{f}+\maru{\theta}(u), \rho_{f}+\maru{\rho}(u), \omega_{i}+\maru{\omega}(u) ) \bigr)du\\
-\int_{s}^{t_{2}} \bigl(  F_{\rho}(u, r_{i}+(u-t_{1})\rho_{f}+\maru{r}(u), \theta_{f}+\maru{\theta}(u), \rho_{f}+\maru{\rho}(u), \omega_{i}+\maru{\omega}(u) ) \bigr)du\\
\int_{t_{1}}^{s} \bigl(  F_{\omega}(u, r_{i}+(u-t_{1})\rho_{f}+\maru{r}(u), \theta_{f}+\maru{\theta}(u), \rho_{f}+\maru{\rho}(u), \omega_{i}+\maru{\omega}(u) ) \bigr)du
\end{array}
\right) \notag
\end{gather}
where the map $P=(P_{r}, P_{\theta}, P_{\rho}, P_{\omega})$ depends on the parameters $t_{1}, t_{2},  r_{i}, \theta_{f}, \rho_{f}, \omega_{i}$.
We will apply the fixed point theorem to solve  \eqref{integral_eq}. 
We define the Banach space on which the map $P$ is defined as follows:

\begin{Def}\label{Banach_Space_of_functions_Newton}
For $ m \geq 0$, we define
\begin{gather*}
Z_{T}^{m}:=\{ z \in C([T, \infty )): \sup \frac{|z(t)|}{|t-T|^{m}} < \infty \}, \ \ 
Z_{T, \infty}^{m}:=\{ z \in Z_{T}^{m}: \lim_{t \to \infty} \frac{|z(t)|}{|t-T|^{m}} = 0 \} .
\end{gather*}
For $m < 0$, we define
\begin{gather*}
Z_{T}^{m}:=\{ z \in C([T, \infty )): \sup \frac{|z(t)|}{\langle t \rangle^{m}} < \infty \}.
\end{gather*}
We define
\begin{gather*}
Z_{t_{1}}^{1, 0, -\tilde{\epsilon}, 1- \tilde{\epsilon}} := \{ (\maru{r}, \maru{\theta}, \maru{\rho}, \maru{\omega}) \in Z_{t_{1}, \infty}^{1} \times Z_{t_{1}}^{0} \times Z_{t_{1}}^{-\tilde{\epsilon}} \times Z_{t_{1}, \infty}^{1- \tilde{\epsilon}} \}.
\end{gather*}
\end{Def}
Then we have the following Proposition:

\begin{Prop}\label{P:contraction}
For large enough $T > 0$,
the map $P$ is a contraction map on\\
 $Z_{t_{1}}^{1, 0, -\tilde{\epsilon}, 1- \tilde{\epsilon}}$
for any $T \leq t_{1} \leq t_{2} \leq \infty, (r_{i}, \theta_{f}, \rho_{f}, \omega_{i}) \in T^{*}(\mathbb{R} \times \mathbb{R}^{n-1})$.
Indeed, for some constant $c$ which does not depend on $t_{1}, t_{2}, (r_{i}, \theta_{f}, \rho_{f}, \omega_{i})$ but $T$, we have
\begin{gather}
\| \nabla_{x}P(x) \|_{B\left( Z_{t_{1}}^{1, 0, -\tilde{\epsilon}, 1- \tilde{\epsilon}} \right)} < c <1
\label{contraction}.
\end{gather}
\end{Prop}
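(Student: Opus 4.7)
The plan is to bound the operator norm of $\nabla P$ block-by-block. With a product norm on $Z_{t_1}^{1,0,-\tilde\epsilon,1-\tilde\epsilon}$ and weights $(m_r,m_\theta,m_\rho,m_\omega)=(1,0,-\tilde\epsilon,1-\tilde\epsilon)$, it suffices to show
\[
\|\del_{\maru j}P_i\|_{B(Z_{t_1}^{m_j},Z_{t_1}^{m_i})} \leq C\, T^{-\tilde\epsilon} \qquad\text{for every }i,j\in\{r,\theta,\rho,\omega\},
\]
with $C$ independent of $t_1,t_2,r_i,\theta_f,\rho_f,\omega_i$; summing the sixteen blocks and taking $T$ large then yields \eqref{contraction}. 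Differentiation of the expression \eqref{integral_eq} under the integral sign is routine because $F$ is smooth and its derivatives decay at the rates listed in \eqref{index}; note also the identity-on-$\maru\rho$ contribution inside $\del_{\maru\rho}P_r$.

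The only analytic ingredient needed is the elementary interpolation
\[
\int_{t_1}^{s} u^{-a-b}\,du \;\leq\; C\, t_1^{-a}(s-t_1)^{1-b},\qquad a\geq 0,\ 0<b<1,\ s\geq t_1\geq T\geq 1,
\]
which follows from $u^{-a-b}\leq t_1^{-a}u^{-b}$ together with $s^{1-b}-t_1^{1-b}\leq (s-t_1)^{1-b}$, and its backward counterpart $\int_s^{t_2} u^{-1-c}\,du\leq c^{-1}s^{-c}$ for the integrals defining $P_\theta$ and $P_\rho$. For each of the sixteen blocks the recipe is the same: read the decay rate of $\del_{\maru j}F_i$ from \eqref{SDFAss}--\eqref{index}; insert the weight $(u-t_1)^{m_j}$ from the bound on $\delta\maru j$ (bounded by $u^{m_j}$ when $m_j>0$); apply one of the two integral bounds; and collect powers of $t_1$ and $s-t_1$, isolating a factor $T^{-\tilde\epsilon}$ while matching the output weight $m_i$. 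For the forward components $P_r,P_\omega$ one handles $s$ close to $t_1$ separately by not bounding $(u-t_1)$ by $u$, so that the ratio $|\del_{\maru j}P_i(\delta\maru j)(s)|/(s-t_1)^{m_i}$ stays finite at $s=t_1$.

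The main difficulty is purely combinatorial bookkeeping: the decay indices $m(l,\alpha,k,\beta)$ in \eqref{index} are engineered precisely to match the weight system $(1,0,-\tilde\epsilon,1-\tilde\epsilon)$, and the splitting $\epsilon=2\tilde\epsilon$ is what allows one to extract a small prefactor $t_1^{-\tilde\epsilon}\leq T^{-\tilde\epsilon}$ from the basic integrand $u^{-\epsilon}=u^{-\tilde\epsilon}\cdot u^{-\tilde\epsilon}$ while retaining enough decay to recover the correct output weight. Finally, one verifies that the images lie in the ``$\infty$''-subspaces $Z_{t_1,\infty}^1$ and $Z_{t_1,\infty}^{1-\tilde\epsilon}$; this is automatic, since each bound, after division by $|s-t_1|^{m_i}$, keeps an extra factor $s^{-\tilde\epsilon}$ or $(s-t_1)^{-\tilde\epsilon}$ that tends to zero as $s\to\infty$.
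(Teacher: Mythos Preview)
Your proposal is correct and follows essentially the same approach as the paper: both argue block-by-block that each $\partial_{\maru j}P_i$ has small operator norm once $T$ is large, and invoke the contraction mapping theorem. The paper's proof is much terser --- it writes out only the single block $\nabla_{\maru r}P_r$ and declares the remaining fifteen ``similar'' --- whereas you supply the explicit integral interpolation $\int_{t_1}^{s}u^{-a-b}\,du\le C\,t_1^{-a}(s-t_1)^{1-b}$ and flag the need to treat $s$ near $t_1$ separately for the forward components; these details are exactly what is hidden behind the paper's ``similar''.
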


\begin{proof}
We first note that $P$ is well defined as a map of $Z_{t_{1}}^{1, 0, -\tilde{\epsilon}, 1- \tilde{\epsilon}}$ into itself. 
Indeed, for example, if $x =(\maru{r}, \maru{\theta}, \maru{\rho}, \maru{\omega}) \in Z_{t_{1}}^{1, 0, -\tilde{\epsilon}, 1- \tilde{\epsilon}}$,
\begin{gather*}
|P_{r}(x)(s) |\\
\leq
\int_{t_{1}}^{s} |\bigl( \maru{\rho}(u) + F_{r}(u, r_{i}+(s-t_{1})\rho_{f}+\maru{r}(u), \theta_{f}+\maru{\theta}(u), \rho_{f}+\maru{\rho}(u), \omega_{i}+\maru{\omega}(u) ) \bigr)|du\\
\leq
\int_{t_{1}}^{s} |C \langle u \rangle^{-\tilde{\epsilon}}+C \langle u \rangle^{-\epsilon}|du,
\end{gather*}
which implies $P_{r}(x)(s) \in Z_{t_{1}, \infty}^{1}$.
Others are similar to prove.

Now we check that $P$ is a contraction on $Z_{t_{1}}^{1, 0, -\tilde{\epsilon}, 1- \tilde{\epsilon}}$.
It suffices to show 
 \eqref{contraction}
for some constant $c$ which does not depend on $t_{1}, t_{2}, (r_{i}, \theta_{f}, \rho_{f}, \omega_{i})$ but $T$.
Let $v \in Z_{t_{1}, \infty}^{1}$.
Then
\begin{align*}
 |s-t_{1}|^{-1} (\nabla_{r}P_{r}(x) v)(s) 
 &\leq |s-t_{1}|^{-1} \int_{t_{1}}^{s} \|\nabla_{r} F_{r}(u, \cdot )\|_{\infty} |u-t_{1}| \| v \|_{Z_{t_{1}, \infty}^{1}} du\\
&\leq \| v \|_{Z_{t_{1}, \infty}^{1}} \int_{t_{1}}^{s} |s - t_{1}|^{-1} |u - t_{1}| \langle u \rangle^{-1-\epsilon} du.
\end{align*}
If we let $T \to \infty$, the right hand side goes to zero uniformly for $T \leq t_{1} \leq t_{2} \leq \infty$. 
Moreover, the right hand side goes to zero as $s \to \infty$.
Hence taking $T$ large enough, we may assure that 
\begin{gather*}
\| \nabla_{r}P_{r} \|_{B(Z_{t_{1}, \infty}^{1})} < c < 1
\end{gather*}
for some constant $c$  for any $T \leq t_{1} \leq t_{2} \leq \infty$ and for any $(r_{i}, \theta_{f}, \rho_{f}, \omega_{i}) \in T^{*}(\mathbb{R} \times \mathbb{R}^{n-1})$. 
In a similar way, we can show that for some large enough $T$,  \eqref{contraction} holds for any $T \leq t_{1} \leq t_{2} \leq \infty$ and for any $(r_{i}, \theta_{f}, \rho_{f}, \omega_{i}) \in T^{*}(\mathbb{R} \times \mathbb{R}^{n-1})$. 
\end{proof}

\begin{proof}[\bf{Proof of Theorem \ref{E4Neq}} .]
The fixed point theorem together with Proposition  \ref{P:contraction} implies that there exists a unique solution
$(\maru{r}, \maru{\theta}, \maru{\rho}, \maru{\omega})(s) \in Z_{t_{1}}^{1, 0, -\tilde{\epsilon}, 1- \tilde{\epsilon}}$
for the integral equation  \eqref{integral_eq}
for each $T \leq t_{1} < t_{2} \leq \infty$
and 
$(r_{i}, \theta_{f}, \rho_{f}, \omega_{i}) \in T^{*}(\mathbb{R} \times \mathbb{R}^{n-1})$ if $T$ is large enough.
$(\maru{r}, \maru{\theta}, \maru{\rho}, \maru{\omega})(s) \in Z_{t_{1}}^{1, 0, -\tilde{\epsilon}, 1- \tilde{\epsilon}}$ directly means  \eqref{0thderivative}.

Let us now prove  \eqref{1stderivative}. We use the identity
\begin{align}
(I - \nabla_{x}P(x) ) \del^{\gamma}(x) 
&=h^{\gamma}=(h_{r}^{\gamma}, h_{\theta}^{\gamma}, h_{\rho}^{\gamma}, h_{\omega}^{\gamma})\label{1stderivatives}\\
:&=
\left(
\begin{array}{c}
\int_{t_{1}}^{s} (\nabla F_{r})(u, y)\del^{\gamma}(y-x)du\\
-\int_{s}^{t_{1}} (\nabla F_{\theta})(u, y)\del^{\gamma}(y-x)du\\
-\int_{s}^{t_{1}} (\nabla F_{\rho})(u, y)\del^{\gamma}(y-x)du\\
\int_{t_{1}}^{s} (\nabla F_{\omega})(u, y)\del^{\gamma}(y-x)du
\end{array}
\right) \notag
\end{align}
where $\del^{\gamma} = \del_{r_{i}}, \del_{\theta_{f}}, \del_{\rho_{f}},$ or $ \del_{\omega_{i}}$,
$x = (\maru{r}, \maru{\theta}, \maru{\rho}, \maru{\omega})$
is the solution of  \eqref{integral_eq}, and
$y = (r_{i}+(u-t_{1})\rho_{f}+\maru{r}(u), \theta_{f}+\maru{\theta}(u), \rho_{f}+\maru{\rho}(u), \omega_{i}+\maru{\omega}(u) )$.
By a straight computation we have
\begin{gather*}
(h^{\del_{r_{i}}}, h^{\del_{\theta_{i}}}, h^{\del_{\rho_{i}}}, h^{\del_{\omega_{i}}})
\in (\langle t_{1} \rangle^{-1-\tilde{\epsilon}}, \langle t_{1} \rangle^{-\tilde{\epsilon}}, \langle t_{1} \rangle^{-\tilde{\epsilon}}, \langle t_{1} \rangle^{-1} ) Z_{t_{1}}^{1, 0, -\tilde{\epsilon}, 1-\tilde{\epsilon}}.
\end{gather*}
 \eqref{contraction} implies that $I - \nabla_{x}P(x)$ is invertible on $Z_{t_{1}}^{1, 0, -\tilde{\epsilon}, 1-\tilde{\epsilon}}$.
Using  \eqref{1stderivatives}, we get 
\begin{gather*}
(\del_{r_{i}} x, \del_{\theta_{f}}x, \del_{\rho_{f}}x, \del_{\omega_{i}}x) \in Z_{t_{1}}^{1, 0, -\tilde{\epsilon}, 1-\tilde{\epsilon}},
\end{gather*}
and 
\begin{gather*}
\| (\del_{r_{i}} x, \del_{\theta_{f}}x, \del_{\rho_{f}}x, \del_{\omega_{i}}x)\|_{Z_{t_{1}}^{1, 0, -\tilde{\epsilon}, 1-\tilde{\epsilon}}} \in \mathrm{O}(\langle t_{1} \rangle^{-1-\tilde{\epsilon}}, \langle t_{1} \rangle^{-\tilde{\epsilon}}, \langle t_{1} \rangle^{-\tilde{\epsilon}}, \langle t_{1} \rangle^{-1} ),
\end{gather*}
which implies  \eqref{1stderivative}.

Now we prove  \eqref{nthderivative} by an induction. Assume  that 
$\del^{\gamma}=\del_{r_{i}}^{l} \del_{\theta}^{\alpha} \del_{\rho}^{k} \del_{\omega}^{\beta}$,
$l+|\alpha |+ k+|\beta | = n \geq 2,$
and  \eqref{nthderivative} is true for $l+|\alpha|+ k+|\beta| \leq n-1$.
We use the identity
\begin{align}
(I - \nabla_{x}P(x) ) \del^{\gamma}(x) 
&=h^{\gamma}=(h_{r}^{\gamma}, h_{\theta}^{\gamma}, h_{\rho}^{\gamma}, h_{\omega}^{\gamma})\label{nthderivatives}\\
:&=
\left(
\begin{array}{c}
\int_{t_{1}}^{s} \sum_{q \geq 2}(\nabla^{q} F_{r})(u, y)\del^{\gamma_{1}}(y)\del^{\gamma_{2}}(y)\cdots \del^{\gamma_{q}}(y)du\\
-\int_{s}^{t_{1}} \sum_{q \geq 2}(\nabla^{q} F_{\theta})(u, y)\del^{\gamma_{1}}(y)\del^{\gamma_{2}}(y)\cdots \del^{\gamma_{q}}(y)du\\
-\int_{s}^{t_{1}} \sum_{q \geq 2}(\nabla^{q} F_{\rho})(u, y)\del^{\gamma_{1}}(y)\del^{\gamma_{2}}(y)\cdots \del^{\gamma_{q}}(y)du\\
\int_{t_{1}}^{s} \sum_{q \geq 2}(\nabla^{q} F_{\omega})(u, y)\del^{\gamma_{1}}(y)\del^{\gamma_{2}}(y)\cdots \del^{\gamma_{q}}(y)du
\end{array}
\right) \notag
\end{align}
where the sum is taken over $\gamma = \sum_{p=1}^{q} \gamma_{p}, q \geq 2$.
The induction hypothesis with a straight computation shows that 
\begin{gather*}
(h^{\gamma})
\in (\langle t_{1} \rangle^{-l - |\beta |} ) Z_{t_{1}}^{1, 0, -\tilde{\epsilon}, 1-\tilde{\epsilon}}.
\end{gather*}
Thus we have 
\begin{gather*}
\| \del_{\gamma} x \|_{Z_{t_{1}}^{1, 0, -\tilde{\epsilon}, 1-\tilde{\epsilon}}} \in (\langle t_{1} \rangle^{-l - |\beta|} ),
\end{gather*}
which implies  \eqref{nthderivative}.
\end{proof}

\subsection{Classical trajectories with long-range time-independent force}\label{Classical trajectories with long-range time-independent force}

We denote the outgoing region by $\Gamma_{R, U, J, Q}^{\  +. \tilde{\epsilon}}$:
\begin{gather*}
\Gamma_{R, U, J, Q}^{\  +. \tilde{\epsilon}}
:=
\{ (r, \theta, \rho, \omega) \in T^{*}(\mathbb{R}\times \mathbb{R}^{n-1}):
r > R,
\theta \in U,
\rho \in J,
|\omega| \leq Q r^{1-\tilde{\epsilon}}
\}
\end{gather*}
for $R>0, U \subset \mathbb{R}^{n-1}, J\subset\mathbb{R}, Q>0$.

We now consider the dynamics with time-independent long-range forces.
\begin{Def}\label{long-range_force}
A time-independent force $F$ is said to be a long-range force if it satisfies
\begin{gather}
\sup _{(r, \theta, \rho, \omega) \in \Gamma_{R, U, J, Q}^{\  +. \tilde{\epsilon}} }
|\del_{r}^{l} \del_{\theta}^{\alpha} \del_{\rho}^{k} \del_{\omega}^{\beta}(F_{r}, F_{\theta}, F_{\rho}, F_{\omega})(r, \theta, \rho, \omega ) |
\in \mathrm{O}( \langle R \rangle^{-n_{r, \theta ,\rho ,\omega }(l, \alpha, k, \beta) })
\label{LRFAss}
\end{gather}
for any $R > 0$, $U \Subset \mathbb{R}^{n-1}$, $J \Subset (0, \infty)$, $Q > 0$.
\end{Def}
As in Theorem \ref{E4Neq}, we show the unique existence of trajectories for the dynamics  where the boundary conditions are the initial position and the final momentum.

\begin{Thm}\label{CTini-fi}
Assume that $F$ is a time-independent long-range force in the sense of Definition \ref{long-range_force}.
Then for any open
$U \Subset \tilde{U} \Subset \mathbb{R}^{n-1}$, open
$J \Subset \tilde{J} \Subset (0, \infty)$, and
$Q > 0$,
there exists $R>0$ such that 
for any $t \geq 0$ and for any
$(r_{i}, \theta_{f}, \rho_{f}, \omega_{i}) \in \Gamma_{R, U, J, Q}^{\  +. \tilde{\epsilon}}$,
there exists a unique trajectory
\begin{gather*}
[0, t] \ni s \mapsto (\tilde{r}, \tilde{\theta}, \tilde{\rho}, \tilde{\omega})(s, t,  r_{i}, \theta_{f}, \rho_{f}, \omega_{i})
\end{gather*}
satisfying
\begin{gather}
\del_{s}(\tilde{r}, \tilde{\theta}, \tilde{\rho}, \tilde{\omega})(s, t,  r_{i}, \theta_{f}, \rho_{f}, \omega_{i})
= (\rho + F_{r}, F_{\theta}, F_{\rho}, F_{\omega})((\tilde{r}, \tilde{\theta}, \tilde{\rho}, \tilde{\omega})(s, t,  r_{i}, \theta_{f}, \rho_{f}, \omega_{i}) ) \label{time_dependent_Newton_eq1}\\
(\tilde{r}, \tilde{\omega})(0, t, r_{i}, \theta_{f}, \rho_{f}, \omega_{i})
= (r_{i}, \omega_{i}), \ \ 
(\tilde{\theta}, \tilde{\rho})(t, t, r_{i}, \theta_{f}, \rho_{f}, \omega_{i})
= (\theta_{f}, \rho_{f}),\notag
\end{gather}
and the estimates
\begin{gather}
|\maru{r}(s)| \in \mathrm{o}((s+ \langle r_{i} \rangle)^{0}) |s|, \ \ 
|\maru{\theta}(s)| \in \mathrm{O}((s+ \langle r_{i} \rangle)^{-\tilde{\epsilon}}),	\label{estimates_for_uniqueness}\\
|\maru{\rho}(s)| \in \mathrm{O}((s+ \langle r_{i} \rangle)^{-\tilde{\epsilon}}), \ \ 
|\maru{\omega}(s)| \in \mathrm{o}((s+ \langle r_{i} \rangle)^{0}) |s|^{1-\tilde{\epsilon}}, \notag
\end{gather}
and
\begin{gather*}
\tilde{\theta}(s, t,  r_{i}, \theta_{f}, \rho_{f}, \omega_{i}) \in \tilde{U}, \ \ 
\tilde{\rho}(s, t,  r_{i}, \theta_{f}, \rho_{f}, \omega_{i})\in \tilde{J}
\end{gather*}
where
\begin{gather}
\maru{r}(s)		=\tilde{r}(s) - r_{i} - s \rho_{f}, \ \ 
\maru{\theta}(s)=\tilde{\theta}(s) - \theta_{f}\notag\\
\maru{\rho}(s)	=\tilde{\rho}(s) - \rho_{f}, \ \ 
\maru{\omega}(s)=\tilde{\omega}(s) - \omega_{i}.\notag
\end{gather}
Moreover the solution satisfies the following estimates uniformly\\ for 
$0 \leq s \leq t \leq \infty$, 
$(r_{i}, \theta_{f}, \rho_{f}, \omega_{i}) \in \Gamma_{R, U, J, Q}^{\  +. \tilde{\epsilon}}$ :
\begin{gather*}
\left(
\begin{array}{cccc}
\del_{r_{i}}\maru{r}(s)			&
\del_{\theta_{f}}\maru{r}(s)	&
\del_{\rho_{f}}\maru{r}(s)		&
\del_{\omega_{i}}\maru{r}(s)	\\
\del_{r_{i}}\maru{\theta}(s)			&
\del_{\theta_{f}}\maru{\theta}(s)	&
\del_{\rho_{f}}\maru{\theta}(s)		&
\del_{\omega_{i}}\maru{\theta}(s)	\\
\del_{r_{i}}\maru{\rho}(s)			&
\del_{\theta_{f}}\maru{\rho}(s)	&
\del_{\rho_{f}}\maru{\rho}(s)		&
\del_{\omega_{i}}\maru{\rho}(s)	\\
\del_{r_{i}}\maru{\omega}(s)			&
\del_{\theta_{f}}\maru{\omega}(s)	&
\del_{\rho_{f}}\maru{\omega}(s)		&
\del_{\omega_{i}}\maru{\omega}(s)	
\end{array}
\right)			\\
\in
\left(
\begin{array}{c}
\mathrm{o}((s+ \langle r_{i} \rangle)^{0}) |s|		\\
\mathrm{O}(1)								\\
\mathrm{O}((s+ \langle r_{i} \rangle)^{-\tilde{\epsilon}})			\\
\mathrm{o}((s+ \langle r_{i} \rangle)^{0}) |s|^{1-\tilde{\epsilon}}
\end{array}
\right)
\otimes
(\langle r_{1} \rangle^{-1-\tilde{\epsilon}},
\langle r_{1} \rangle^{- \tilde{\epsilon}},
\langle r_{1} \rangle^{- \tilde{\epsilon}},
\langle r_{1} \rangle^{-1}
),
\end{gather*}
\begin{gather*}
\del_{r_{i}}^{l} \del_{\theta}^{\alpha} \del_{\rho}^{k} \del_{\omega}^{\beta}
\left(
\begin{array}{c}
\maru{r}\\
\maru{\theta}\\
\maru{\rho}\\
\maru{\omega}
\end{array}
\right)
\in
\left(
\begin{array}{c}
\mathrm{o}((s+ \langle r_{i} \rangle)^{0}) |s-t_{1}|		\\
\mathrm{O}(1)								\\
\mathrm{O}((s+ \langle r_{i} \rangle)^{-\tilde{\epsilon}})			\\
\mathrm{o}((s+ \langle r_{i} \rangle)^{0}) |s-t_{1}|^{1-\tilde{\epsilon}}
\end{array}
\right) \cdot
\langle r_{i} \rangle^{-l-|\beta |}.
\end{gather*}

\end{Thm}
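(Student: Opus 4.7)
The plan is to reduce this long-range time-independent problem to the slowly-decaying time-dependent setting of Theorem \ref{E4Neq}, following Section 2.7 of \cite{DG97}. The key observation is that on a neighborhood of the outgoing region where the motion is nearly free, $\dot r \approx \rho$ with $\rho \in J \Subset (0,\infty)$, so along the trajectory $r(s) \gtrsim \langle r_i\rangle + s$, which converts any spatial decay $\langle r\rangle^{-\mu}$ of $F$ into a time decay $\langle t\rangle^{-\mu}$ once the dynamics is localized to this region by cut-offs.

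Concretely, I would pick intermediate open sets $U \Subset U' \Subset \tilde U$ and $J \Subset J' \Subset \tilde J$, set $c_0 := \tfrac{1}{2}\inf J > 0$, and choose smooth cut-offs $\chi_\theta \in C_0^\infty(\tilde U)$, $\chi_\rho \in C_0^\infty(\tilde J)$ equal to one on $U'$, $J'$, together with $\kappa \in C^\infty(\mathbb{R})$ satisfying $\kappa(x)=1$ for $x\geq 2$ and $\kappa(x)=0$ for $x\leq 1$, and $\sigma \in C_0^\infty(\mathbb{R})$ with $\sigma(x)=1$ for $|x|\leq 1$. Defining the time-dependent force
\[
\tilde F(t,r,\theta,\rho,\omega) := F(r,\theta,\rho,\omega)\,\chi_\theta(\theta)\,\chi_\rho(\rho)\,\kappa\!\left(\tfrac{2r}{R+c_0 t}\right)\sigma\!\left(\tfrac{|\omega|}{2Q r^{1-\tilde\epsilon}}\right),
\]
on its support $(r,\theta,\rho,\omega)$ lies in a slight enlargement of the outgoing region and satisfies $r\geq (R+c_0 t)/2$, so Definition \ref{long-range_force} yields the desired time decay. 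A Leibniz computation then shows that $\tilde F$ satisfies the slow-decay estimate \eqref{SDFAss} with the indices \eqref{index}: derivatives in $\theta,\rho$ cost nothing, each $\partial_r$ produces a factor $\lesssim\langle t\rangle^{-1}$ (from $F$ or from the $r$-cut-off), and each $\partial_\omega$ produces a factor $\lesssim\langle t\rangle^{-(1-\tilde\epsilon)}$ from the $\omega$-cut-off (using that $r\gtrsim\langle t\rangle$ on the support).

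With $\tilde F$ verified to be slowly-decaying, Theorem \ref{E4Neq} applied on $[0,t]$ (the threshold $T$ there is played here by $R$, since $\tilde F = O(\langle t+R\rangle^{-\cdot})$ and the contraction argument of Proposition \ref{P:contraction} remains valid as $R\to\infty$) yields, for $R$ large enough, a unique solution of the modified Newton equation with the prescribed boundary data and the derivative estimates \eqref{0thderivative}--\eqref{nthderivative}. The zeroth-order bounds \eqref{0thderivative} imply $\tilde\theta(s)\in\tilde U$, $\tilde\rho(s)\in\tilde J$, $|\tilde\omega(s)|\leq Q\tilde r(s)^{1-\tilde\epsilon}$, and $\tilde r(s)\geq R+c_0 s$ for $R$ large, so all cut-offs equal one along the trajectory, giving $\tilde F=F$ and hence a solution of the original equation. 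Replacing $\langle t\rangle^{-\cdot}$ by $(s+\langle r_i\rangle)^{-\cdot}$ in the slow-decay bounds on $\tilde F$ (valid because $r(s)\gtrsim s+\langle r_i\rangle$ on the support) gives the claimed mixed-decay estimates, while the extra factor $\langle r_i\rangle^{-l-|\beta|}$ from $\partial_{r_i}$-derivatives is read off from \eqref{1stderivatives}, \eqref{nthderivatives} applied with $r_i$ as the parameter. Uniqueness under \eqref{estimates_for_uniqueness} is immediate: any such solution is trapped in the cut-off region, hence coincides with the unique fixed point from Theorem \ref{E4Neq}.

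The main obstacle will be the multi-derivative bookkeeping of $\tilde F$ in the second step: the Leibniz expansion of $\partial_r^l\partial_\theta^\alpha\partial_\rho^k\partial_\omega^\beta\tilde F$ involves derivatives of $F$ paired with derivatives of the four cut-offs, and each resulting term must be checked against the precise indices $m(l,\alpha,k,\beta)$ in \eqref{index}. The interaction between $\partial_\omega$ and $\partial_r$ on the $\omega$-cut-off $\sigma(|\omega|/(2Qr^{1-\tilde\epsilon}))$ is the most delicate, as it produces mixed factors of $r^{-1}$ and $\omega r^{-(2-\tilde\epsilon)}$ which must combine with the support constraint $|\omega|\lesssim r^{1-\tilde\epsilon}$ to give exactly the right power of $\langle t\rangle$.
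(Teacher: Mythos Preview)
Your approach is essentially the paper's: localize $F$ by cut-offs in $r,\theta,\rho,\omega$ to produce a time-dependent slowly-decaying force, then invoke Theorem~\ref{E4Neq} and check a posteriori that the cut-offs equal~$1$ along the trajectory. The paper's effective force is $F_{\mathrm e}(t,r,\theta,\rho,\omega)=I_r(r/t)I_\theta(\theta)I_\rho(\rho)I_\omega(\omega/r^{1-\tilde\epsilon})F$, essentially your $\tilde F$.

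There is, however, one mechanism you are missing. You apply Theorem~\ref{E4Neq} directly on $[0,t]$ and argue that ``the threshold $T$ is played by $R$''. This gives a contraction for $R$ large, but the estimates it produces carry factors of $\langle R\rangle^{-\cdot}$ (or $\langle s+R\rangle^{-\cdot}$), not the $\langle r_i\rangle^{-\cdot}$ and $(s+\langle r_i\rangle)^{-\cdot}$ required in the statement, since your cut-off (and hence the uniform decay of $\tilde F$) depends on $R$, not on the particular initial value $r_i\geq R$. Your sentence ``replacing $\langle t\rangle^{-\cdot}$ by $(s+\langle r_i\rangle)^{-\cdot}$ \dots\ (valid because $r(s)\gtrsim s+\langle r_i\rangle$ on the support)'' conflates a trajectory bound with a support bound; the contraction argument uses only the latter. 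The paper resolves this by a \emph{time shift}: it applies Theorem~\ref{E4Neq} on $[t_1,t_2]=[\mathfrak r,\mathfrak r+t]$ with $\mathfrak r=|r_i|C_0/(C_0-\epsilon_0)\sim r_i$, and then sets
\[
(\tilde r,\tilde\theta,\tilde\rho,\tilde\omega)(s,t,\dots):=(\tilde r_{\mathrm e},\tilde\theta_{\mathrm e},\tilde\rho_{\mathrm e},\tilde\omega_{\mathrm e})(\mathfrak r+s,\mathfrak r,\mathfrak r+t,\dots).
\]
Since the estimates \eqref{0thderivative}--\eqref{nthderivative} of Theorem~\ref{E4Neq} carry the parameter $t_1$, substituting $t_1=\mathfrak r\sim r_i$ immediately yields the $(s+\langle r_i\rangle)^{-\tilde\epsilon}$ and $\langle r_i\rangle^{-l-|\beta|}$ factors with no further work. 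You should incorporate this shift; once you do, the rest of your outline (including the uniqueness argument and the verification that the cut-offs are inactive along the trajectory) matches the paper's proof.
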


\begin{proof}
There exists $C_{0}$ such that
if $\rho \in J$ and $r > 0$, then
\begin{gather}
|r +(s-t_{1})\rho| \geq C_{0}(|s-t_{1}|+r) \label{outgoing}.
\end{gather}
We fix constants $\epsilon_{0}, \tilde{Q}, \epsilon_{1}$ such that
\begin{gather*}
0 < \epsilon_{0} < C_{0},\ \ 
\tilde{Q} \geq \frac{2Q}{C_{0}^{1-\tilde{\epsilon}}},\ \ 
0 < \epsilon_{1} < \frac{1}{2}\tilde{Q}(C_{0}-\epsilon_{0})^{1-\tilde{\epsilon}},
\end{gather*}
and introduce cut-off functions $I_{r}, I_{\theta}, I_{\rho}, I_{\omega}$ as follows.
We take $I_{r} \in C^{\infty}(0, \infty)$ such that $I_{r} = 1$ on a neighborhood of $\{r; r > C_{0}-\epsilon_{0} \}$, $I_{\theta} \in C_{0}^{\infty}(\mathbb{R}^{n-1})$ such that $I_{\theta} = 1 $ on $\tilde{U}$, $I_{\rho} \in C_{0}^{\infty}(0, \infty)$ such that $I_{\rho} = 1$ on $\tilde{J}$, and $I_{\omega} \in C_{0}^{\infty}(\mathbb{R}^{n-1})$ such that
$I_{\omega} = 1$ on a neighborhood of $\{ \omega: |\omega| < \tilde{Q}\}$.
Using these cut-off functions, we define the effective time-dependent force $F_{\text{e}}$ by
\begin{gather*}
F_{\text{e}}(t, r, \theta, \rho, \omega)
= I_{r}(\frac{r}{t}) I_{\theta}(\theta ) I_{\rho}( \rho ) I_{\omega}(\frac{\omega}{r^{1-\tilde{\epsilon}}})F(r, \theta, \rho, \omega).
\end{gather*}
It follows from  \eqref{LRFAss} that $F_{\text{e}}(t, r, \theta, \rho, \omega)$ 
is a slowly-decaying force in the sense of Definition \ref{slowly-decaying_force}.
Therefore, we can find $T$ such that the boundary value problem considered in Theorem \ref{E4Neq}
possesses a unique solution for any $T \leq t_{1} \leq t_{2}$ 
and any $r_{i}, \theta_{f}, \rho_{f}, \omega_{i}$.
Let us denote this solution by
\begin{gather*}
(\tilde{r}_{e}, \tilde{\theta}_{e}, \tilde{\rho}_{e}, \tilde{\omega}_{e})(s, t_{1}, t_{2}, r_{i}, \theta_{f}, \rho_{f}, \omega_{i}).
\end{gather*}
By enlarging $T$ if needed, we can guarantee that
\begin{gather}
|\tilde{r}_{e}(s, t_{1}, t_{2}, r_{i}, \theta_{f}, \rho_{f}, \omega_{i}) - r_{i} - (s- t_{1}) \rho_{f}| \leq \epsilon_{0}|s-t_{1}|,\label{rissmall}\\
|\tilde{\theta}_{e}(s, t_{1}, t_{2}, r_{i}, \theta_{f}, \rho_{f}, \omega_{i}) - \theta_{f}| \leq
 \text{dist}(U, \tilde{U}^{C}) \notag \\
|\tilde{\rho}_{e}(s, t_{1}, t_{2}, r_{i}, \theta_{f}, \rho_{f}, \omega_{i}) - \rho_{f}| \leq
 \text{dist}(J, \tilde{J}^{C}) \notag \\ 
 |\tilde{\omega}_{e}(s, t_{1}, t_{2}, r_{i}, \theta_{f}, \rho_{f}, \omega_{i}) - \omega_{i}| \leq \epsilon_{1}|s-t_{1}|^{1- \tilde{\epsilon}}\label{omegaissmall}.
\end{gather}

We claim that if $R =T(C_{0}-\epsilon_{0})/C_{0}$
and $(r_{i}, \theta_{f}, \rho_{f}, \omega_{i}) \in\Gamma_{R, U, J, Q}^{\  +. \tilde{\epsilon}},$
then we can solve our boundary problem by setting
\begin{gather}
(\tilde{r}, \tilde{\theta}, \tilde{\rho}, \tilde{\omega})(s, t, r_{i}, \theta{f}, \rho_{f}, \omega_{i}) 
:= (\tilde{r}_{e}, \tilde{\theta}_{e}, \tilde{\rho}_{e}, \tilde{\omega}_{e})(r + s, r, r+t, r_{i}, \theta_{f}, \rho_{f}, \omega_{i}) \label{time_dependent_independent_trick_solution}
\end{gather}
where
$r=|r_{i}|C_{0}/(C_{0}-\epsilon).$
Indeed, from  \eqref{outgoing},  \eqref{rissmall}, and  \eqref{omegaissmall} we see that
\begin{gather*}
|\tilde{r}_{e}(r + s, r, r+t, r_{i}, \theta_{f}, \rho_{f}, \omega_{i})|
\geq (C_{0} - \epsilon_{0})|s + r|,\\
\tilde{\theta}_{e}(r + s, r, r+t, r_{i}, \theta_{f}, \rho_{f}, \omega_{i}) \in \tilde{U},\\
\tilde{\rho}_{e}(r + s, r, r+t, r_{i}, \theta_{f}, \rho_{f}, \omega_{i}) \in \tilde{J},
\end{gather*}
and
\begin{gather*}
|\tilde{\omega}_{e}(r + s, r, r+t, r_{i}, \theta_{f}, \rho_{f}, \omega_{i})|
\leq \epsilon_{1}|s|^{1- \tilde{\epsilon}} + \omega_{i}
\leq \epsilon_{1}|s|^{1- \tilde{\epsilon}} + Q r_{i}^{1-\tilde{\epsilon}}\\
\leq \epsilon_{1}|s|^{1- \tilde{\epsilon}} + Q \bigl( \frac{C_{0} - \epsilon_{0}}{C_{0}} \bigr)^{1-\tilde{\epsilon}} r_{i}^{1-\tilde{\epsilon}}
\leq \tilde{Q}(C_{0} - \epsilon_{0})^{1-\tilde{\epsilon}} |s + r |^{1-\tilde{\epsilon}}\\
\leq \tilde{Q} |\tilde{r}_{e}(r + s, r, r+t, r_{i}, \theta_{f}, \rho_{f}, \omega_{i})|^{1-\tilde{\epsilon}}.
\end{gather*}
Hence we have
\begin{gather*}
F_{\text{e}}(r + s, (\tilde{r}_{e}, \tilde{\theta}_{e}, \tilde{\rho}_{e}, \tilde{\omega}_{e})(r + s, r, r+t, r_{i}, \theta_{f}, \rho_{f}, \omega_{i}) ) \\
= F( (\tilde{r}_{e}, \tilde{\theta}_{e}, \tilde{\rho}_{e}, \tilde{\omega}_{e})(r + s, r, r+t, r_{i}, \theta_{f}, \rho_{f}, \omega_{i}) ).
\end{gather*}
Therefore the function  \eqref{time_dependent_independent_trick_solution}
solves the boundary problem  \eqref{time_dependent_Newton_eq1} with the initial time-independent force.

The estimates on $(\tilde{r}, \tilde{\theta}, \tilde{\rho}, \tilde{\omega})(s, t, r_{i}, \theta{f}, \rho_{f}, \omega_{i}) $ are obtained directly from \\those of Theorem  \ref{E4Neq} 
using the identity  \eqref{time_dependent_independent_trick_solution} and replacing $s, t_{1}, t_{2}$
there \\
by $s + \langle r_{i} \rangle, \langle r_{i} \rangle, t+\langle r_{i} \rangle$.

Finally, the uniqueness of the solution comes from the fact that any solution of  \eqref{time_dependent_Newton_eq1}
with  \eqref{estimates_for_uniqueness} is also a solution of the problem considered in Theorem \ref{E4Neq}
for the force $F_{\text{e}}(t, r, \theta, \rho, \omega)$ if time $t$ is large enough.

\end{proof}

Now we solve the dynamics with initial conditions.

\begin{Thm}\label{CTini}
Assume $F$ is a time-independent long-range force in the sense of Definition  \ref{long-range_force}.
Then for any open
$U \Subset \tilde{U} \Subset \mathbb{R}^{n-1}$, open
$J \Subset \tilde{J} \Subset (0, \infty)$, and
$Q > 0$,
there exists $R>0$ such that 
for any
$(r_{0}, \theta_{0}, \rho_{0}, \omega_{0}) \in \Gamma_{R, U, J, Q}^{\  +. \tilde{\epsilon}}$,
there exists a unique trajectory
\begin{gather*}
[0, \infty) \ni s \mapsto (\tilde{r}, \tilde{\theta}, \tilde{\rho}, \tilde{\omega})(s,  r_{0}, \theta_{0}, \rho_{0}, \omega_{0})
\end{gather*}
satisfying
\begin{gather*}
\del_{s}(\tilde{r}, \tilde{\theta}, \tilde{\rho}, \tilde{\omega})(s,  r_{0}, \theta_{0}, \rho_{0}, \omega_{0})
= (\rho + F_{r}, F_{\theta}, F_{\rho}, F_{\omega})((\tilde{r}, \tilde{\theta}, \tilde{\rho}, \tilde{\omega})(s,  r_{0}, \theta_{0}, \rho_{0}, \omega_{0}) ),\\
(\tilde{r}, \tilde{\theta}, \tilde{\rho}, \tilde{\omega})(0, r_{0}, \theta_{0}, \rho_{0}, \omega_{0})
= (r_{0}, \theta_{0}, \rho_{0}, \omega_{0}).
\end{gather*}
Set
\begin{gather*}
\maru{r}(s)		=\tilde{r}(s) - r_{0} - s \rho_{0} ,\ \ 
\maru{\theta}(s)=\tilde{\theta}(s) - \theta_{0},\\
\maru{\rho}(s)	=\tilde{\rho}(s) - \rho_{0},\ \ 
\maru{\omega}(s)=\tilde{\omega}(s) - \omega_{0}.
\end{gather*}
Moreover the solution satisfies the following estimates uniformly for 
$0 \leq s \leq t \leq \infty$, \\
$(r_{0}, \theta_{0}, \rho_{0}, \omega_{0}) \in \Gamma_{R, U, J, Q}^{\  +. \tilde{\epsilon}}$ :
\begin{gather*}
\tilde{\theta}(s,  r_{0}, \theta_{0}, \rho_{0}, \omega_{0}) \in \tilde{U},\ \ 
\tilde{\rho}(s,  r_{0}, \theta_{0}, \rho_{0}, \omega_{0}) \in \tilde{J},
\end{gather*}
\begin{gather*}
|\maru{r}(s)| \in \mathrm{o}((s+ \langle r_{0} \rangle)^{0}) |s|, \ \ 
|\maru{\theta}(s)| \in \mathrm{O}((\langle r_{0} \rangle)^{-\tilde{\epsilon}}),			\\
|\maru{\rho}(s)| \in \mathrm{O}((\langle r_{0} \rangle)^{-\tilde{\epsilon}}), \ \
|\maru{\omega}(s)| \in \mathrm{o}((s+ \langle r_{0} \rangle)^{0}) |s|^{1-\tilde{\epsilon}}, \\
\left(
\begin{array}{cccc}
\del_{r_{0}}\maru{r}(s)			&
\del_{\theta_{0}}\maru{r}(s)	&
\del_{\rho_{0}}\maru{r}(s)		&
\del_{\omega_{0}}\maru{r}(s)	\\
\del_{r_{0}}\maru{\theta}(s)			&
\del_{\theta_{0}}\maru{\theta}(s)	&
\del_{\rho_{0}}\maru{\theta}(s)		&
\del_{\omega_{0}}\maru{\theta}(s)	\\
\del_{r_{0}}\maru{\rho}(s)			&
\del_{\theta_{0}}\maru{\rho}(s)	&
\del_{\rho_{0}}\maru{\rho}(s)		&
\del_{\omega_{0}}\maru{\rho}(s)	\\
\del_{r_{0}}\maru{\omega}(s)			&
\del_{\theta_{0}}\maru{\omega}(s)	&
\del_{\rho_{0}}\maru{\omega}(s)		&
\del_{\omega_{0}}\maru{\omega}(s)	
\end{array}
\right)			\\
\in
\left(
\begin{array}{c}
\mathrm{o}((s+ \langle r_{0} \rangle)^{0}) |s|		\\
\mathrm{O}(1)								\\
(\langle r_{0} \rangle)^{-\tilde{\epsilon}}			\\
\mathrm{o}((s+ \langle r_{0} \rangle)^{0}) |s|^{1-\tilde{\epsilon}}
\end{array}
\right) 
\otimes
(\langle r_{0} \rangle^{-1-\tilde{\epsilon} },
\langle r_{0} \rangle^{\tilde{\epsilon} },
\langle r_{0} \rangle^{\tilde{\epsilon} },
\langle r_{0} \rangle^{-1}
),
\end{gather*}
\begin{gather*}
\del_{r_{0}}^{l} \del_{\theta_{0}}^{\alpha} \del_{\rho_{0}}^{k} \del_{\omega_{0}}^{\beta}
\left(
\begin{array}{c}
\maru{r}\\
\maru{\theta}\\
\maru{\rho}\\
\maru{\omega}
\end{array}
\right)
\in
\left(
\begin{array}{c}
\mathrm{o}((s+ \langle r_{0} \rangle)^{0}) |s-t_{1}|		\\
\mathrm{O}(1)								\\
(\langle r_{0} \rangle)^{-\tilde{\epsilon}}			\\
\mathrm{o}((s+ \langle r_{0} \rangle)^{0}) |s-t_{1}|^{1-\tilde{\epsilon}}
\end{array}
\right) \cdot
\langle r_{0} \rangle^{-l-|\beta |}.
\end{gather*}

\end{Thm}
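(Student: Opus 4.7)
The plan is to reduce the initial value problem to the two-point boundary value problem already solved in Theorem \ref{CTini-fi}. Given open sets $U \Subset \tilde{U}$, $J \Subset \tilde{J}$ and a constant $Q$, we pick auxiliary open sets $U \Subset U' \Subset \tilde U$ and $J \Subset J' \Subset \tilde J$, and apply Theorem \ref{CTini-fi} to the pair $(U',\tilde U)$, $(J',\tilde J)$ with the same $Q$. This produces $R_{0}$ such that, for every $(r_{i},\theta_{f},\rho_{f},\omega_{i}) \in \Gamma_{R_{0}, U', J', Q}^{+.\tilde\epsilon}$ and every $t\geq 0$ (including $t = +\infty$), a unique trajectory $(\tilde r,\tilde\theta,\tilde\rho,\tilde\omega)(s,t,r_{i},\theta_{f},\rho_{f},\omega_{i})$ exists together with its estimates.

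First I would fix $(r_{i},\omega_{i})$ and take $t=+\infty$, so that the boundary conditions are $(\tilde r(0),\tilde\omega(0)) = (r_{i},\omega_{i})$ and $(\tilde\theta(s),\tilde\rho(s)) \to (\theta_{f},\rho_{f})$ as $s\to\infty$. Define the evaluation map
\begin{equation*}
\Psi_{r_{i},\omega_{i}} : (\theta_{f},\rho_{f}) \longmapsto \bigl( \tilde\theta(0,\infty,r_{i},\theta_{f},\rho_{f},\omega_{i}),\ \tilde\rho(0,\infty,r_{i},\theta_{f},\rho_{f},\omega_{i}) \bigr).
\end{equation*}
By the first derivative estimates of Theorem \ref{CTini-fi} evaluated at $s=0$ (applied through the change $s,t_{1},t_{2} \mapsto s+\langle r_{i}\rangle, \langle r_{i}\rangle, t+\langle r_{i}\rangle$), one has
\begin{equation*}
\partial_{\theta_{f}}\underline{\theta}(0), \partial_{\rho_{f}}\underline{\rho}(0),\partial_{\theta_{f}}\underline{\rho}(0),\partial_{\rho_{f}}\underline{\theta}(0) \in \mathrm O(\langle r_{i}\rangle^{-\tilde\epsilon}),
\end{equation*}
so that $d\Psi_{r_{i},\omega_{i}}$ differs from the identity by $\mathrm O(\langle r_{i}\rangle^{-\tilde\epsilon})$, and $|\Psi_{r_{i},\omega_{i}}(\theta_{f},\rho_{f}) - (\theta_{f},\rho_{f})| \in \mathrm O(\langle r_{i}\rangle^{-\tilde\epsilon})$. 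A straightforward contraction argument (equivalently, a quantitative inverse function theorem) then yields $R \geq R_{0}$ such that, for every $(r_{0},\theta_{0},\rho_{0},\omega_{0}) \in \Gamma_{R,U,J,Q}^{+.\tilde\epsilon}$ and every $(r_{i},\omega_{i})=(r_{0},\omega_{0})$, the equation $\Psi_{r_{0},\omega_{0}}(\theta_{f},\rho_{f}) = (\theta_{0},\rho_{0})$ has a unique solution $(\theta_{f},\rho_{f})=:(\theta_{\infty}(r_{0},\theta_{0},\rho_{0},\omega_{0}),\rho_{\infty}(r_{0},\theta_{0},\rho_{0},\omega_{0}))$ in $U' \times J'$, depending smoothly on $(r_{0},\theta_{0},\rho_{0},\omega_{0})$.

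Plugging this solution back defines
\begin{equation*}
(\tilde r,\tilde\theta,\tilde\rho,\tilde\omega)(s,r_{0},\theta_{0},\rho_{0},\omega_{0}) := (\tilde r,\tilde\theta,\tilde\rho,\tilde\omega)\bigl(s,\infty,r_{0},\theta_{\infty},\rho_{\infty},\omega_{0}\bigr),
\end{equation*}
which by construction solves the Newton equation with the prescribed initial data, and stays in $\tilde U\times\tilde J$ thanks to the containment estimates of Theorem \ref{CTini-fi}. The stated zeroth-order estimates for $\underline r,\underline\theta,\underline\rho,\underline\omega$ follow from those of Theorem \ref{CTini-fi} together with $|\theta_{\infty}-\theta_{0}|,|\rho_{\infty}-\rho_{0}| \in \mathrm O(\langle r_{0}\rangle^{-\tilde\epsilon})$ (the latter coming from inverting $\Psi_{r_{0},\omega_{0}}$). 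The higher derivative estimates are obtained by the chain rule: derivatives with respect to $(r_{0},\theta_{0},\rho_{0},\omega_{0})$ of the composed trajectory are written as sums of derivatives with respect to $(r_{i},\theta_{f},\rho_{f},\omega_{i})$ multiplied by derivatives of $(r_{0},\omega_{0},\theta_{\infty},\rho_{\infty})$, the latter controlled by implicit differentiation of $\Psi_{r_{0},\omega_{0}}(\theta_{\infty},\rho_{\infty})=(\theta_{0},\rho_{0})$ and the bound $(d\Psi)^{-1} = I + \mathrm O(\langle r_{0}\rangle^{-\tilde\epsilon})$.

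Finally, uniqueness is established in the reverse direction: any solution of the initial value problem satisfying the claimed $\mathrm O$/$\mathrm o$ estimates must have $(\tilde\theta,\tilde\rho)(s)$ converging, as $s\to\infty$, to some limits $(\theta_{f},\rho_{f})\in \tilde U\times \tilde J$, and it is thus a trajectory of the boundary value problem of Theorem \ref{CTini-fi} with parameters $(r_{0},\theta_{f},\rho_{f},\omega_{0})$; the uniqueness asserted there forces agreement with the trajectory constructed above. The main technical obstacle I anticipate is ensuring that the inverse function argument for $\Psi_{r_{0},\omega_{0}}$ is uniform in $(r_{0},\omega_{0})$ throughout $\Gamma_{R,U,J,Q}^{+.\tilde\epsilon}$ and that the resulting estimates survive the chain rule with the correct powers of $\langle r_{0}\rangle$; everything else is bookkeeping on top of Theorem \ref{CTini-fi}.
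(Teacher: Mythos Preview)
Your proposal is correct and follows essentially the same route as the paper: take the $t=\infty$ trajectories from Theorem~\ref{CTini-fi}, note that evaluation at $s=0$ gives a map $(\theta_f,\rho_f)\mapsto(\theta_0,\rho_0)$ (with $r_i=r_0$, $\omega_i=\omega_0$ fixed) that is an $\mathrm O(\langle r_i\rangle^{-\tilde\epsilon})$-perturbation of the identity, invert it for $R$ large, and then read off all the estimates via the chain rule from those of Theorem~\ref{CTini-fi}. Your insertion of the intermediate sets $U',J'$ and the explicit uniqueness argument are minor refinements over the paper's write-up, but the strategy is the same.
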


\begin{proof}

Let $(\bar{r}, \bar{\theta}, \bar{\rho}, \bar{\omega})$
be the solutions in Theorem  \ref{CTini-fi} with $t=\infty$:
\begin{gather*}
[0, \infty ]\ni s \mapsto (\bar{r}, \bar{\theta}, \bar{\rho}, \bar{\omega})(s, \infty, r_{i}, \theta_{f}, \rho_{f}, \omega_{i}),\\
\del_{s}(\bar{r}, \bar{\theta}, \bar{\rho}, \bar{\omega})(s, \infty,  r_{i}, \theta_{f}, \rho_{f}, \omega_{i})\\
= (\rho + F_{r}, F_{\theta}, F_{\rho}, F_{\omega})((\bar{r}, \bar{\theta}, \bar{\rho}, \bar{\omega})(s, \infty,  r_{i}, \theta_{f}, \rho_{f}, \omega_{i}) ), \\
(\bar{r}, \bar{\omega})(0, \infty, r_{i}, \theta_{f}, \rho_{f}, \omega_{i})
= (r_{i}, \omega_{i}), \ \ 
(\bar{\theta}, \bar{\rho})(\infty, \infty, r_{i}, \theta_{f}, \rho_{f}, \omega_{i})
= (\theta_{f}, \rho_{f}).
\end{gather*}
Set
\begin{gather*}
(r_{0}, \theta_{0}, \rho_{0}, \omega_{0})(r_{i}, \theta_{f}, \rho_{f}, \omega_{i}) :=
(\bar{r}, \bar{\theta}, \bar{\rho}, \bar{\omega})(0, \infty, r_{i}, \theta_{f}, \rho_{f}, \omega_{i}).
\end{gather*}
It is clear that
\begin{gather*}
(r_{0}, \omega_{0})(r_{i}, \theta_{f}, \rho_{f}, \omega_{i})= (r_{i}, \omega_{i}).
\end{gather*}
Theorem  \ref{CTini-fi} assures the following estimates:
\begin{gather*}
|\theta_{0}(r_{i}, \theta_{f}, \rho_{f}, \omega_{i}) - \theta_{f}| \in \mathrm{O}( \langle r_{i} \rangle^{- \tilde{\epsilon}}), \ \ 
|\rho_{0}(r_{i}, \theta_{f}, \rho_{f}, \omega_{i}) - \rho_{f}| \in \mathrm{O}( \langle r_{i} \rangle^{- \tilde{\epsilon}}),
\end{gather*}
\begin{gather}
\left(
\begin{array}{cccc}
\del_{r_{i}}(\theta_{0}-\theta_{f})			&
\del_{\theta_{f}}(\theta_{0}-\theta_{f})	&
\del_{\rho_{f}}(\theta_{0}-\theta_{f})		&
\del_{\omega_{i}}(\theta_{0}-\theta_{f})	\\
\del_{r_{i}}(\rho_{0}-\rho_{f})			&
\del_{\theta_{f}}(\rho_{0}-\rho_{f})	&
\del_{\rho_{f}}(\rho_{0}-\rho_{f})		&
\del_{\omega_{i}}(\rho_{0}-\rho_{f})	
\end{array}
\right)			\notag\\  
\in
\left(
\begin{array}{c}
\mathrm{O}(1)								\\
(\langle r_{i} \rangle)^{-\tilde{\epsilon}}			
\end{array}
\right)
\otimes
(\langle r_{1} \rangle^{-1-\tilde{\epsilon}},
\langle r_{1} \rangle^{- \tilde{\epsilon}},
\langle r_{1} \rangle^{- \tilde{\epsilon}},
\langle r_{1} \rangle^{-1}
).
\label{estimates_for_ini-fi}
\end{gather}
By taking $R$ large enough, we can assure that the map $(r_{0}, \theta_{0}, \rho_{0}, \omega_{0})(r_{i}, \theta_{f}, \rho_{f}, \omega_{i})$ is injective.
Let $(r_{i}, \theta_{i}, \rho_{i}, \omega_{i})(r_{0}, \theta_{0}, \rho_{0}, \omega_{0})$
be the inverse function. 
We will show that
\begin{gather*}
(\tilde{r}, \tilde{\theta}, \tilde{\rho}, \tilde{\omega})(s, r_{0}, \theta_{0}, \rho_{0}, \omega_{0}):=
(\bar{r}, \bar{\theta}, \bar{\rho}, \bar{\omega})(s, \infty, (r_{i}, \theta_{f}, \rho_{f}, \omega_{i})(r_{0}, \theta_{0}, \rho_{0}, \omega_{0}))
\end{gather*}
gives the desired function. \eqref{estimates_for_ini-fi} implies
\begin{gather}
\left(
\begin{array}{cccc}
\del_{r_{0}}(\theta_{f}-\theta_{0})			&
\del_{\theta_{0}}(\theta_{f}-\theta_{0})	&
\del_{\rho_{0}}(\theta_{f}-\theta_{0})		&
\del_{\omega_{0}}(\theta_{f}-\theta_{0})	\\
\del_{r_{0}}(\rho_{f}-\rho_{0})			&
\del_{\theta_{0}}(\rho_{f}-\rho_{0})	&
\del_{\rho_{0}}(\rho_{f}-\rho_{0})		&
\del_{\omega_{0}}(\rho_{f}-\rho_{0})	
\end{array}
\right)			\notag\\  
\in
\left(
\begin{array}{c}
\mathrm{O}(1)								\\
(\langle r_{0} \rangle)^{-\tilde{\epsilon}}			
\end{array}
\right) 
\otimes
(\langle r_{1} \rangle^{-1-\tilde{\epsilon}},
\langle r_{1} \rangle^{- \tilde{\epsilon}},
\langle r_{1} \rangle^{- \tilde{\epsilon}},
\langle r_{1} \rangle^{-1}
).
\label{estimates_for_initial_1}
\end{gather}
Moreover, it is easy to see that
\begin{gather}
\del_{r_{0}}^{l} \del_{\theta_{0}}^{\alpha} \del_{\rho_{0}}^{k} \del_{\omega_{0}}^{\beta}
\left(
\begin{array}{c}
\theta_{f}-\theta_{0} \\
\rho_{f}-\rho_{0}
\end{array}
\right)
\in 
\left(
\begin{array}{c}
\mathrm{O}(\langle r_{0} \rangle^{-l-|\beta |})\\
\mathrm{O}(\langle r_{0} \rangle^{-l-|\beta | -\tilde{\epsilon}})
\end{array}
\right). \label{estimates_for_initial_2}
\end{gather}
 \eqref{estimates_for_initial_1} and  \eqref{estimates_for_initial_2}
shows the desired estimates.
\end{proof}

\subsection{Solutions to the Hamilton-Jacobi equation}\label{Solutions of the Hamilton-Jacobi equation}

We state a lemma which relates the hamiltonian $h$ with the time-independent force $F$.

\begin{Lem}\label{HtoF}
Let 
\begin{gather*}
h(r, \theta, \rho, \omega ) = \frac{1}{2}\rho^{2} + \tilde{h}(r, \theta, \rho, \omega )\\
\tilde{h}(r, \theta, \rho, \omega )
= \frac{1}{2} a_{1}(r, \theta ) \rho^{2} + \frac{1}{r} a_{2}^{j}\rho \omega_{j}
+ \frac{1}{2r^{2}} a_{3}^{j k} \omega_{j} \omega_{k} +V^{L}(r, \theta) .
\end{gather*}
Assume
\begin{gather*}
|\partial_{r}^{l}\partial_{\theta}^{\alpha}a_{j}(r, \theta ) | \leq C_{l, \alpha} r^{-\mu_{j}-l}, \ \ 
|D_{r}^{j}D_{\theta}^{\alpha}V^{L}(r, \theta )|\leq C_{j} r^{-\mu_{L} -j},
\end{gather*}
with
\begin{gather*}
\mu_{1} = \mu_{2} = \mu_{L} = \epsilon > 0, \ \ 
\mu_{3} = 0.
\end{gather*}
Then for any
$ U \Subset \mathbb{R}^{n-1}, J\Subset\mathbb{R},$ and $ Q>0$,
\begin{gather}
\sup _{(r, \theta, \rho, \omega) \in \Gamma_{R, U, J, Q}^{\  +. \tilde{\epsilon}} }
|\del_{r}^{l} \del_{\theta}^{\alpha} \del_{\rho}^{k} \del_{\omega}^{\beta}(\tilde{h})(r, \theta, \rho, \omega ) |
\in \mathrm{O}( \langle R \rangle^{-m(l, \alpha, k, \beta) })
\label{LRhAss}.
\end{gather}
This immediately implies that setting
\begin{gather}
(F_{r}, F_{\theta}, F_{\rho}, F_{\omega}) = (\del_{\rho}\tilde{h}, \del_{\omega}\tilde{h}, -\del_{r}\tilde{h}, -\del_{\theta}\tilde{h}),\label{Ftohformula}
\end{gather}
we have
 \eqref{LRFAss}, i.e., $h$ defines a long-range time-independent force via \eqref{Ftohformula}.\end{Lem}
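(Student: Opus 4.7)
The plan is to establish \eqref{LRhAss} by a direct term-by-term differentiation of $\tilde{h}$, exploiting two crucial structural features: $\tilde{h}$ is a polynomial of degree $\leq 2$ in $(\rho,\omega)$, and the outgoing region $\Gamma_{R,U,J,Q}^{+,\tilde{\epsilon}}$ enforces both $\rho\in J$ (bounded) and $|\omega|\leq Qr^{1-\tilde{\epsilon}}$. Once \eqref{LRhAss} is in hand, \eqref{LRFAss} follows mechanically from \eqref{Ftohformula} by matching $n_r,n_\theta,n_\rho,n_\omega$ with the corresponding shifted $m$ indices.

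First, I would dispose of the trivial cases $k+|\beta|\geq 3$: since $\tilde{h}$ contains only the monomials $\rho^2$, $\rho\omega_j$, $\omega_j\omega_k$ in the momentum variables, any such derivative vanishes identically, matching $m=+\infty$. It remains to verify the finitely many cases with $k+|\beta|\leq 2$. For each case I would differentiate each of the four pieces $\tfrac12 a_1\rho^2$, $r^{-1}a_2^j\rho\omega_j$, $\tfrac{1}{2r^2}a_3^{jk}\omega_j\omega_k$, and $V^L$ separately. The $r$- and $\theta$-derivatives either hit a coefficient (producing the decay $r^{-\mu_j-l}$ by Assumption 1) or reduce the power of $r^{-1}$ appearing explicitly, and the $(\rho,\omega)$-derivatives simply reduce the degree of the momentum polynomial.

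The critical observation is that on $\Gamma_{R,U,J,Q}^{+,\tilde{\epsilon}}$ one has $|\omega|^j\leq Q^j r^{j(1-\tilde{\epsilon})}$, so for instance the undifferentiated term $r^{-2}a_3^{jk}\omega_j\omega_k$ (with $\mu_3=0$) is bounded by $r^{-2+2(1-\tilde{\epsilon})}=r^{-2\tilde{\epsilon}}=r^{-\epsilon}$, exactly matching $m(0,0,0,0)=\epsilon$; similarly the cross term $r^{-1}a_2^j\rho\omega_j$ is bounded by $r^{-1-\epsilon+1-\tilde{\epsilon}}=r^{-\epsilon-\tilde{\epsilon}}$, which is stronger. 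A parallel check shows that $\partial_{\omega_i}\tilde{h}$ gives $r^{-1-\tilde{\epsilon}}$ (from the $a_3$ piece) matching $m(0,0,0,e_i)=1+\tilde{\epsilon}$, $\partial_{\omega_i}\partial_{\omega_j}\tilde{h}$ gives $r^{-2}$ matching $m(0,0,0,e_i+e_j)=2$, $\partial_\rho\tilde{h}$ gives $r^{-\epsilon}$ matching $m(0,0,1,0)=\epsilon$, $\partial_\rho\partial_{\omega_i}\tilde{h}$ gives $r^{-1-\epsilon}$ matching $m(0,0,1,e_i)=1+\epsilon$, and $\partial_\rho^2\tilde{h}=a_1$ gives $r^{-\epsilon}$ matching $m(0,0,2,0)=\epsilon$. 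The additional $\partial_r^l$ derivatives contribute $r^{-l}$ to each estimate, and $\partial_\theta^\alpha$ derivatives contribute no further decay in $r$, which precisely accounts for the $+l$ in the definition of $m(l,\alpha,k,\beta)$.

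Finally, to deduce \eqref{LRFAss} from \eqref{LRhAss}, observe that $F_r=\partial_\rho\tilde{h}$, $F_\theta=\partial_\omega\tilde{h}$, $F_\rho=-\partial_r\tilde{h}$, $F_\omega=-\partial_\theta\tilde{h}$ imply $\partial_r^l\partial_\theta^\alpha\partial_\rho^k\partial_\omega^\beta F_r=\partial_r^l\partial_\theta^\alpha\partial_\rho^{k+1}\partial_\omega^\beta\tilde{h}$, which is $\mathrm{O}(R^{-m(l,\alpha,k+1,\beta)})=\mathrm{O}(R^{-n_r(l,\alpha,k,\beta)})$, and analogously for the other three components. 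The main (and only real) obstacle is the bookkeeping: there are nine non-trivial cases of $(k,\beta)$ to check, and one must be careful that the gain from $|\omega|\leq Qr^{1-\tilde{\epsilon}}$ (with $\tilde{\epsilon}=\epsilon/2$) is exactly what is needed to reconcile the $\omega^2$ term of $\tilde{h}$, whose coefficient does not decay ($\mu_3=0$), with the required $r^{-\epsilon}$ rate.
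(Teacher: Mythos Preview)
Your proposal is correct and is exactly the natural direct verification; the paper itself states Lemma~\ref{HtoF} without proof, leaving this routine case-by-case computation to the reader. Your bookkeeping of the six structural cases $k+|\beta|\leq 2$ (the ``nine'' is a slight overcount) is accurate, and the key identification $r^{-2}|\omega|^{2}\leq Q^{2}r^{-2\tilde{\epsilon}}=Q^{2}r^{-\epsilon}$ on $\Gamma_{R,U,J,Q}^{\,+,\tilde{\epsilon}}$ is precisely why the choice $\tilde{\epsilon}=\epsilon/2$ is made.
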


Combining Theorem  \ref{CTini} and Lemma  \ref{HtoF}, we obtain solutions to the Hamilton-Jacobi equation:

\begin{Thm}\label{SThm}
Let $h, \tilde{h}$ be as in Lemma  \ref{HtoF}.
For any $\tilde{U} \Subset U \Subset \mathbb{R}^{n-1}, \tilde{J} \Subset J \Subset (0, \infty)$, $C_{j, \alpha} > 0$, there exists $T > 0$ such that if a smooth function $\psi(\rho, \theta)$ defined on $J \times U$ satisfies
\begin{gather}
|\del_{\rho}^{j}\del_{\theta}^{\alpha} ( \psi(\rho, \theta) - \frac{1}{2}s \rho^{2} )| \leq C_{j, \alpha} \langle s \rangle^{1 - \tilde{\epsilon}} \label{psi_condition}
\end{gather}
for some $s > T$, then there exists a unique function $S(t, \rho, \theta)$
defined on a region 
$\Theta \subset(0, \infty ) \times (0, \infty) \times \mathbb{R}^{n-1}$ ( which will be defined in the proof ), with
$\Theta \supset(0, \infty ) \times \tilde{J} \times \tilde{U}$,
satifying the Hamilton-Jacobi equation:
\begin{gather*}
(\del_{t} S )(t, \rho, \theta ) = h( (\del_{\rho}S )(\rho, \theta ) , \theta, \rho, -(\del_{\theta}S )(\rho, \theta ) )
\end{gather*}
with the initial value
\begin{gather*}
S(0, \rho, \theta) = \psi(\rho, \theta).
\end{gather*}
Moreover the function $S$ satisfies the following estimates:
\begin{gather}
|\del_{\rho}^{j} \del_{\theta}^{\alpha} (S(t, \rho, \theta ) - \frac{1}{2}t \rho^{2} )|
\leq \tilde{C}_{j, \alpha} \langle t \rangle^{1- \epsilon}. \label{estimates_for_S}
\end{gather}
\end{Thm}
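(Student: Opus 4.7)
The plan is to construct $S$ by the classical method of characteristics, following H\"{o}rmander \cite{Ho76} and Section 2.7 of \cite{DG97}. By Lemma \ref{HtoF}, the Hamiltonian $h = \tfrac{1}{2}\rho^2 + \tilde{h}$ induces, through $(F_r, F_\theta, F_\rho, F_\omega) = (\partial_\rho \tilde{h}, \partial_\omega \tilde{h}, -\partial_r \tilde{h}, -\partial_\theta \tilde{h})$, a long-range time-independent force, so Theorem \ref{CTini} is available. For each $(\rho_0, \theta_0) \in J \times U$ I attach the initial phase-space data
\[
r_0 := \partial_\rho \psi(\rho_0, \theta_0), \qquad \omega_0 := -\partial_\theta \psi(\rho_0, \theta_0).
\]
Differentiating (\ref{psi_condition}) yields $r_0 = s\rho_0 + \mathrm{O}(\langle s\rangle^{1-\tilde\epsilon})$ and $\omega_0 = \mathrm{O}(\langle s\rangle^{1-\tilde\epsilon})$, so for $T$ sufficiently large (depending on $U, J, Q$, $C_{j,\alpha}$) one has $(r_0, \theta_0, \rho_0, \omega_0) \in \Gamma_{R, U, J, Q}^{\,+,\tilde\epsilon}$ uniformly.

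Theorem \ref{CTini} produces the trajectory $(\tilde{r}, \tilde{\theta}, \tilde{\rho}, \tilde{\omega})(\tau, \rho_0, \theta_0)$; its first-derivative estimates give $\partial_{\rho_0}\tilde\rho = 1 + \mathrm{O}(\langle r_0\rangle^{-\tilde\epsilon})$, $\partial_{\theta_0}\tilde\theta = I + \mathrm{O}(\langle r_0\rangle^{-\tilde\epsilon})$ and off-diagonal entries of size $\mathrm{O}(\langle r_0\rangle^{-\tilde\epsilon})$, uniformly in $\tau$. Enlarging $T$ once more, the inverse function theorem makes $\Phi_t: (\rho_0, \theta_0) \mapsto (\tilde\rho(t,\cdot), \tilde\theta(t,\cdot))$ a diffeomorphism from $J\times U$ onto its image, and the smallness of $\Phi_t - \mathrm{id}$ ensures $\Phi_t(J\times U) \supset \tilde J \times \tilde U$ for every $t \geq 0$. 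I set $\Theta := \{(t,\rho,\theta): t \geq 0,\, (\rho,\theta) \in \Phi_t(J\times U)\}$, denote the inverse by $(\rho_0, \theta_0)(t, \rho, \theta) := \Phi_t^{-1}(\rho, \theta)$, and define $S$ by the action integral along characteristics:
\[
S(t, \rho, \theta) := \psi(\rho_0, \theta_0) + \int_0^t \bigl(\tilde r\,\dot{\tilde\rho} - \tilde\omega\cdot\dot{\tilde\theta} + h(\tilde r, \tilde\theta, \tilde\rho, \tilde\omega)\bigr)(\tau)\, d\tau,
\]
evaluated at the trajectory with initial data $(\rho_0, \theta_0)(t, \rho, \theta)$. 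Using $\partial_{\rho_0}\psi = r_0$, $\partial_{\theta_0}\psi = -\omega_0$ together with Hamilton's equations and an integration by parts, a direct computation yields $\partial_{\rho_0}[S\circ\Phi_t] = \tilde r(t)\partial_{\rho_0}\tilde\rho(t) - \tilde\omega(t)\cdot\partial_{\rho_0}\tilde\theta(t)$ and the analogous identity for $\partial_{\theta_0}$; invertibility of $D\Phi_t$ then delivers $\partial_\rho S = \tilde r(t)$ and $\partial_\theta S = -\tilde\omega(t)$. Differentiating $S(t, \tilde\rho(t), \tilde\theta(t))$ in $t$ and inserting these identities produces the Hamilton-Jacobi equation $\partial_t S = h(\partial_\rho S, \theta, \rho, -\partial_\theta S)$. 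The initial condition $S(0,\rho,\theta) = \psi(\rho,\theta)$ is immediate since $\Phi_0 = \mathrm{id}$, and uniqueness follows because any other solution satisfies the same characteristic identity.

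The main obstacle is the estimate (\ref{estimates_for_S}). Writing $\tfrac{1}{2}t\rho^2 = \int_0^t \tfrac{1}{2}\rho^2\, d\tau$ and subtracting from the definition of $S$, I decompose
\[
S - \tfrac{1}{2}t\rho^2 = \psi(\rho_0, \theta_0) + \int_0^t \bigl(\tilde r\,\dot{\tilde\rho} - \tilde\omega\cdot\dot{\tilde\theta} + \tilde h\bigr) d\tau + \tfrac{1}{2}\int_0^t \bigl(\tilde\rho(\tau)^2 - \rho^2\bigr) d\tau.
\]
By Lemma \ref{HtoF}, Hamilton's equations $\dot{\tilde\rho} = -\partial_r h$, $\dot{\tilde\theta} = \partial_\omega h$, and the trajectory estimates of Theorem \ref{CTini}, each of the first three integrands is $\mathrm{O}(\langle \tau + \langle r_0\rangle\rangle^{-\epsilon})$ and integrates to $\mathrm{O}(\langle t\rangle^{1-\epsilon})$; for the last integral I split $\tilde\rho(\tau)^2 - \rho^2 = 2\rho(\tilde\rho(\tau)-\rho) + (\tilde\rho(\tau) - \rho)^2$ and use $\tilde\rho(\tau) - \rho = -\int_\tau^t \dot{\tilde\rho}(\sigma)\,d\sigma$ with $\dot{\tilde\rho} = -\partial_r h$, after which a change in the order of integration produces the same $\langle t\rangle^{1-\epsilon}$ bound. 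Derivatives in $(\rho, \theta)$ are handled via the chain rule using uniform-in-$t$ bounds on the derivatives of $(\rho_0, \theta_0)(t, \rho, \theta)$ combined with the higher-order trajectory derivative estimates from Theorem \ref{CTini}; the constant $\tilde C_{j,\alpha}$ in (\ref{estimates_for_S}) is permitted to depend on $s$, the $C_{j,\alpha}$, and $U, \tilde U, J, \tilde J, Q$.
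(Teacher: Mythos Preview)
Your construction coincides with the paper's: attach initial phase-space data via $\psi$, invoke Theorem~\ref{CTini} through Lemma~\ref{HtoF}, show $(\rho_0,\theta_0)\mapsto(\tilde\rho(t),\tilde\theta(t))$ is a near-identity diffeomorphism, and define $S$ by the action integral along characteristics; the identities $\partial_\rho S=\tilde r$, $\partial_\theta S=-\tilde\omega$ and the Hamilton--Jacobi equation then follow as in \cite{DG97}, Appendix~A.3.

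The one genuine difference is in how you prove the estimate~\eqref{estimates_for_S}. You expand the action integral and bound each of $\psi(\rho_0,\theta_0)$, $\int\tilde r\,\dot{\tilde\rho}$, $\int\tilde\omega\cdot\dot{\tilde\theta}$, $\int\tilde h$, and $\tfrac12\int(\tilde\rho^2-\rho^2)$ separately, then repeat with chain-rule differentiation for the higher $(\rho,\theta)$-derivatives. The paper instead uses the Hamilton--Jacobi equation itself: since $\partial_t\bigl(S-\tfrac12 t\rho^2\bigr)=\tilde h(\partial_\rho S,\theta,\rho,-\partial_\theta S)$, one differentiates the right-hand side in $(\rho,\theta)$ by the Fa\`a~di~Bruno rule, feeds in the identities $\partial_\rho S=\tilde r$, $-\partial_\theta S=\tilde\omega$ together with the trajectory bounds, applies Lemma~\ref{HtoF} to get $\mathrm{O}(\langle t\rangle^{-\epsilon})$ for every such term, and integrates once in~$t$. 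This buys a cleaner bookkeeping---no separate treatment of the $\psi$-term or the $\int(\tilde\rho^2-\rho^2)$ piece, and no need to differentiate under the $\tau$-integral---while your route is more explicit about where each contribution comes from. Both arguments rely on the same inputs (Lemma~\ref{HtoF} and the derivative estimates of Theorem~\ref{CTini}) and yield the same bound with constants depending on $s$, $C_{j,\alpha}$, and the domains.
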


\begin{proof}

Let
\begin{gather*}
[0, \infty) \ni t \mapsto (\tilde{r}, \tilde{\theta}, \tilde{\rho}, \tilde{\omega})(t,  r_{0}, \theta_{0}, \rho_{0}, \omega_{0})
\end{gather*}
be the unique trajectory of the Hamilton equations with initial value problem as in the Theorem  \ref{CTini}:
\begin{gather*}
\del_{t}(\tilde{r}, \tilde{\theta}, \tilde{\rho}, \tilde{\omega})(t,  r_{0}, \theta_{0}, \rho_{0}, \omega_{0})
= (\rho + F_{r}, F_{\theta}, F_{\rho}, F_{\omega})((\tilde{r}, \tilde{\theta}, \tilde{\rho}, \tilde{\omega})(t,  r_{0}, \theta_{0}, \rho_{0}, \omega_{0}) ),\\
(\tilde{r}, \tilde{\theta}, \tilde{\rho}, \tilde{\omega})(0, r_{0}, \theta_{0}, \rho_{0}, \omega_{0})
= (r_{0}, \theta_{0}, \rho_{0}, \omega_{0}),
\end{gather*}
for $(r_{0}, \theta_{0}, \rho_{0}, \omega_{0} ) \in \Gamma_{R, U, J, Q}^{\  +. \tilde{\epsilon}}$,
where we took $R > 0$ such that
\begin{gather*}
\{ ((\del_{\rho}\psi )(\rho_{0}, \theta_{0} ) , \theta_{0}, \rho_{0}, -(\del_{\theta}\psi )(\rho_{0}, \theta_{0} ) )  :  (\theta_{0}, \rho_{0} ) \in U \times J \}
\subset
\Gamma_{R, U, J, Q}^{\  +. \tilde{\epsilon}}.
\end{gather*}
Set
\begin{gather*}
(r, \theta, \rho, \omega )(t; \rho_{0}, \theta_{0} )
:= (\tilde{r}, \tilde{\theta}, \tilde{\rho}, \tilde{\omega})(t,  (\del_{\rho}\psi )(\rho_{0}, \theta_{0} ) , \theta_{0}, \rho_{0}, -(\del_{\theta}\psi )(\rho_{0}, \theta_{0} )).
\end{gather*}
and consider the  map
\begin{gather}
(\rho_{0}, \theta_{0} ) \mapsto (\rho, \theta)(t; \rho_{0}, \theta_{0} ) \label{rho_theta_bijection}
\end{gather}
and its first derivatives.
We set 
$
\Theta := \{ (t, (\rho, \theta)(t; \rho_{0}, \theta_{0} ) ) | (\rho_{0}, \theta_{0}) \in J \times U \}.
$
By a straight computation, we obtain
\begin{gather*}
\Big| \frac{\del (\rho, \theta)(t, \rho_{0}, \theta_{0})}{\del(\rho_{0}, \theta_{0})} -
\left(
\begin{array}{cc}
1 & 0\\
0 & 1
\end{array}
\right)
\Big|
=
C\langle s \rangle^{- \tilde{\epsilon}}.
\end{gather*}
where $C$ depends on $C_{j, \alpha}$ and not on the choice of $\psi$ as long as $\psi$ satisfies  $ \eqref{psi_condition}$ for some $s$.
We fix a large enough $T > 0$ 
so that for any $ s > T$ we have
\begin{gather*}
\left|
\frac{\del (\rho, \theta)(t, \rho_{0}, \theta_{0})}{\del(\rho_{0}, \theta_{0})} -
\left(
\begin{array}{cc}
1 & 0\\
0 & 1
\end{array}
\right)
\right|
\ll 1.
\end{gather*}
Now  \eqref{rho_theta_bijection} becomes an injective map for every $t > 0$.
We denote its inverse by
\begin{gather*}
(\rho, \theta ) \mapsto(\rho_{0}, \theta_{0} )(t; \rho, \theta ).
\end{gather*}
Let
\begin{gather*}
Q(t; \rho_{0},  \theta_{0})\\
= \psi (\rho_{0}, \theta_{0})
+\int_{0}^{t}[ h( (r, \theta, \rho, \omega )(u; \rho_{0}\omega_{0})\\
+ \langle r(t; \rho_{0}, \theta_{0}), (\del_{u}\rho )(u; \rho, \theta ) \rangle
- \langle \omega(t; \rho_{0}, \theta_{0}), (\del_{u}\theta )(u; \rho, \theta ) \rangle
] du.
\end{gather*}
Then the function
\begin{equation*}
S(t, \rho, \theta ) =  Q(t; (\rho_{0}, \theta_{0})(t; \rho, \theta ))
\end{equation*}
defined on $\Theta$
is the desired solution to the Hamilton-Jacobi equation (see, for example,  \cite{DG97} Appendix A.3).
Moreover
\begin{gather*}
(\del_{\rho} S)(t, \rho, \theta ) = r(t; \rho_{0}(t, \rho, \theta ), \theta_{0}(t, \rho, \theta ) ),\\
-(\del_{\theta} S)(t, \rho, \theta ) = \omega(t; \rho_{0}(t, \rho, \theta ), \theta_{0}(t, \rho, \theta ) ).
\end{gather*}

The derivatives of $S(t, \rho, \theta)$
\begin{gather*}
\del_{\rho}^{j} \del_{\theta}^{\alpha} \del_{t} ( S(t, \rho , \theta ) - \frac{1}{2}t \rho^{2})
=\del_{\rho}^{j} \del_{\theta}^{\alpha} \tilde{h}( \del_{\rho}S(t, \rho , \theta ), \theta, \rho, -\del_{\theta}S(t, \rho , \theta ))
\end{gather*}
is a summation of the terms of the type
\begin{gather*}
(\del_{r}^{l} \del_{\theta}^{\beta} \del_{\rho}^{k} \del_{\omega}^{\gamma} \tilde{h})
( \del_{\rho}S(t, \rho , \theta ), \theta, \rho, -\del_{\theta}(t, \rho , \theta ))
\times\\
\prod_{i=1}^{l}\del_{\rho}^{k_{i}} \del_{\theta}^{\beta_{i}} (\del_{\rho} S)(t, \rho, \theta )
\times
\prod_{d=1}^{n-1} \prod_{j=1}^{\gamma_{d}} \del_{\rho}^{k_{d, j}} \del_{\theta}^{\beta_{d, j}} (-\del_{\theta_{d}} S)(t, \rho, \theta )),
\end{gather*}
which belongs to $\mathrm{O}(\langle t \rangle^{-m(l, \beta, k, \gamma) + l + (1- \tilde{\epsilon})|\gamma|})  \subset \mathrm{O}(\langle t \rangle^{-\epsilon})$.
This shows  \eqref{estimates_for_S}.

\end{proof}

Finally we extend $S(t, \rho, \theta)$ to a globally defined function on $\mathbb{R} \times (0, \infty) \times \del M$ which satisfies the same kind of estimates locally.

\begin{Thm}\label{SThmGlobal}
Let $h, \tilde{h}$ be as in Lemma  \ref{HtoF} defined on $T^{*}\mathbb{R} \times T^{*}\del M$.
Then there exists a function $S(t, \rho, \theta)$
defined on $T^{*}\mathbb{R} \times T^{*}\del M$
such that for every $J \Subset \mathbb{R} \setminus \{ 0 \}$, there exists $T > 0$ such that the Hamilton-Jacobi equation:
\begin{gather}
(\del_{t} S )(t, \rho, \theta ) = h( (\del_{\rho}S )(\rho, \theta ) , \theta, \rho, -(\del_{\theta}S )(\rho, \theta ) ) \label{S_Hamilton_Jacobi_eq}
\end{gather}
is satisfied for $t > |T|$, $\rho \in J$, and $\theta \in \del M$.
Moreover the function $S$ satisfies the following estimates:
\begin{gather}
|\del_{\rho}^{j} \del_{\theta}^{\alpha} (S(t, \rho, \theta ) - \frac{1}{2}t \rho^{2} )|
\leq \tilde{C}_{j, \alpha} \langle t \rangle^{1- \epsilon}. \label{estimates_for_SGlobal}
\end{gather}
\end{Thm}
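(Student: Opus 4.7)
The plan is to apply Theorem \ref{SThm} in local coordinate patches on $\partial M$ and on compact subsets of $\mathbb{R}\setminus\{0\}$, paste the resulting local solutions together via the uniqueness of the Hamilton--Jacobi characteristic flow, and finally extend smoothly across the singular set $\{\rho=0\} \cup \{t \text{ small}\}$ by means of cutoff functions. We exploit the fact that \eqref{S_Hamilton_Jacobi_eq} is only required on regions of the form $\{t > |T|\} \times J \times \partial M$ for $J \Subset \mathbb{R}\setminus\{0\}$, so the behavior near $\rho=0$ is free.

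Since $\partial M$ is compact, I would choose a finite cover $\{U_\lambda\}_{\lambda=1}^N$ by coordinate charts together with shrinkings $V_\lambda \Subset U_\lambda$ still covering $\partial M$. Given a compact $J_+ \Subset (0,\infty)$, take the natural initial datum $\psi(\rho,\theta) = \tfrac{1}{2} s \rho^2$ at some large $s$, which trivially satisfies \eqref{psi_condition} with $C_{j,\alpha} = 0$. Applying Theorem \ref{SThm} in each chart produces a local solution $S_\lambda^{+,J_+}$ on $(T,\infty) \times J_+ \times V_\lambda$ obeying the estimate \eqref{estimates_for_S}. Because each $S_\lambda^{+,J_+}$ is produced by inverting the flow map $(\rho_0,\theta_0) \mapsto (\rho,\theta)(t;\rho_0,\theta_0)$ attached to the intrinsic Hamiltonian flow on $T^*(\mathbb{R} \times \partial M)$, and since the datum $\tfrac{1}{2} s \rho^2$ is coordinate-invariant in $\theta$, the uniqueness clause of Theorem \ref{SThm} forces $S_\lambda^{+,J_+} = S_\mu^{+,J_+}$ on overlaps $V_\lambda \cap V_\mu$. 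Exhausting $(0,\infty)$ by compacts $J_+^{(n)}$ and patching again by uniqueness, one obtains a single function $S^+$ on $\{(t,\rho,\theta) : \rho > 0,\ t > T^+(\rho),\ \theta \in \partial M\}$. Repeating the analysis in incoming regions (by the symmetric analogue of Sections~\ref{Classical trajectories with slowly-decaying time-dependent force}--\ref{Classical trajectories with long-range time-independent force}, obtained via the involution $(r,\rho) \mapsto (-r,-\rho)$) yields $S^-$ on the corresponding region with $\rho < 0$.

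To extend globally to $T^*\mathbb{R} \times T^*\partial M$, I would pick smooth cutoffs $\chi(\rho) \in C^\infty(\mathbb{R})$ vanishing near $0$ and $\eta(t) \in C^\infty(\mathbb{R})$ vanishing near $t = 0$, each equal to $1$ outside a slightly larger neighborhood, and set
\begin{equation*}
S(t,\rho,\theta) := \eta(t)\chi(\rho)\, S^{\pm}(t,\rho,\theta) + \bigl(1 - \eta(t)\chi(\rho)\bigr) \cdot \tfrac{1}{2} t \rho^2,
\end{equation*}
with $S^{\pm}$ chosen according to the sign of $\rho$. For any $J \Subset \mathbb{R}\setminus\{0\}$ and sufficiently large $|t|$, the product $\eta \chi$ is identically $1$ on $J$, so $S = S^{\pm}$ there and \eqref{S_Hamilton_Jacobi_eq} is satisfied. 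The estimate \eqref{estimates_for_SGlobal} follows from \eqref{estimates_for_S} on the support of $\chi \eta$, is trivial on its complement where $S = \tfrac{1}{2} t \rho^2$, and in the transition zone comes from boundedness of derivatives of the cutoffs combined with \eqref{estimates_for_S}. The main subtlety lies in verifying coordinate-invariance of the characteristic construction of Theorem \ref{SThm}, which is needed to legitimately patch solutions from different charts; this reduces to the fact that the Hamiltonian flow on $T^*(\mathbb{R} \times \partial M)$ and the initial datum $\tfrac12 s\rho^2$ are intrinsic objects independent of the chart on $\partial M$.
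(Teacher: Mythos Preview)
Your reduction to local charts on $\partial M$ via coordinate invariance of the Hamiltonian flow is fine and matches what the paper does in one line. The genuine gap is in the step ``exhausting $(0,\infty)$ by compacts $J_+^{(n)}$ and patching again by uniqueness.'' The uniqueness clause in Theorem~\ref{SThm} is uniqueness of the \emph{Cauchy problem with prescribed initial data at a prescribed initial time}. For each $J_+^{(n)}$ the threshold $T$ in Theorem~\ref{SThm} depends on $J_+^{(n)}$ (through the constants in Theorem~\ref{CTini-fi}, which deteriorate as $\inf J \to 0$), so you are forced to launch the solution $S_n$ from the datum $\tfrac12 s_n\rho^2$ at a time $s_n$ that grows with $n$. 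But then $S_n(s_m,\rho,\theta)\neq \tfrac12 s_m\rho^2=S_m(s_m,\rho,\theta)$ in general, since the full Hamiltonian $h=\tfrac12\rho^2+\tilde h$ does not propagate $\tfrac12 s\rho^2$ to $\tfrac12 t\rho^2$; the two solutions disagree on the overlap and no uniqueness statement forces them to coincide. A related symptom appears in your final extension: a \emph{fixed} cutoff $\chi(\rho)$ vanishing near $0$ makes $S=\tfrac12 t\rho^2$ on $\{0<\rho<\delta\}$ for all $t$, so for $J\subset(0,\delta)$ there is \emph{no} $T$ with \eqref{S_Hamilton_Jacobi_eq} valid on $J$.

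The paper resolves this with an inductive scheme rather than a direct patching: having built $S_n$ on $J_n\times[T_n,\infty)$, one takes $\chi_n\in C_0^\infty(J_n)$ equal to $1$ on $\overline{J_{n-1}}$ and launches the next Cauchy problem from the datum $\chi_n S_n+(1-\chi_n)\tfrac12 t\rho^2$ at a new, larger time $T_{n+1}$. Because this datum coincides with $S_n$ on $J_{n-1}$, uniqueness of the Cauchy problem now legitimately gives $S_{n+1}=S_n$ on $J_{n-1}\times[T_{n+1},\infty)$, and the sequence stabilizes on each fixed $J_k$ for $t\ge T_{k+1}$. This is the missing idea in your argument; once you insert it, the rest of your outline (handling $\rho<0$ by the reflection, smooth extension in $t$) goes through.
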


\begin{proof}
First note that since $\del M$ is compact and the Hamilton-Jacobi equation is defined in a coordinate invariant manner, we can extend $U$ in the Theorem  \ref{SThm} to $\del M$.
It is sufficient to consider the case $J \Subset (0, \infty)$ and $t > T$, since we can extend the function $S$ in a $C^{\infty}$-fashion.

Take a sequence of open sets in $(0, \infty)$ such that
\begin{gather*}
J_{0} \Subset J_{1} \Subset J_{2} \Subset J_{3} \Subset \dots, \ \ 
\bigcup_{n = 0}^{\infty} J_{n} = (0, \infty ).
\end{gather*}

First we solve the Cauchy problem for the Hamilton-equation with initial data
\begin{equation*}
S(t, \rho , \theta) = \frac{1}{2}t \rho^{2} \ \ \ \text{when}\ \  \rho \in J_{1}, t = T_{1} >0
\end{equation*}
for a big enough $T_{1}$ by Theorem  \ref{SThm} with $U$ replaced by $\del M$.
We denote the solution by $S_{1}$.
We can assume that $S_{1}$ is defined on $(T_{1}, \infty) \times J_{0} \times \del M$.
$S_{1}$ also satisfies  \eqref{estimates_for_S} for $\rho \in J_{1}$ and $t  \geq T_{1}$.

Next  we take $\chi_{1} \in C_{0}^{\infty}(J_{1})$ equal to $1$ 
in a neighborhood of $\overline{J_{0}}$ (the closure of $J_{0}$).
We solve the Cauchy Problem
with initial  data
\begin{equation*}
S(t, \rho , \theta ) = \chi_{1}S_{1}+(1-\chi_{1})\frac{t}{2}\rho^{2}\ \ \ \text{when}\ \  \rho \in J_{2}, t=T_{2}.
\end{equation*}
By taking $T_{2} > T_{1}$ large enough,
the right hand side satisfies the conditions for $T_{2}$ in Theorem  \ref{SThm}.
So we can solve the Cauchy Problem for such $T_{2}$.
We denote the solution by $S_{2}$.

Repeating this procedure,
we obtain a sequence $S_{n}$ of functions and a sequence $T_{1} < T_{2} < \dots$ such that
$S_{n}$ is defined on $J_{n} \times [T_{n}, \infty ) \times \del M$,
\begin{equation*}
S_{n+1} = S_{m}\ \  \text{for} \ \ m \geq n+1 \ \ \text{on} \ \ J_{n} \times [T_{m}, \infty ) \times \del M,
\end{equation*}
and satisfies
 \eqref{estimates_for_S}.
Thus by extending in a $C^{\infty}$ fashion,
we can construct a $C^{\infty}$ function $S$
which satisfies  \eqref{S_Hamilton_Jacobi_eq},
and  \eqref{estimates_for_S}
for large enough $t$ and $\rho$ in  any fixed compact subset of $(0, \infty)$.

\end{proof}

\section{Proof of Theorem \ref{Wave_MainThm}}\label{Sec_Proof_of_the_main_Thm}

In this section, we give the proof of Theorem  \ref{Wave_MainThm}.
First we give the outline of the proof.

\begin{proof}[\bf{Outline of the Proof.}]

We consider the $t \to +\infty$ case.
By the density argument, it is sufficient to show the existence of the norm limit
\begin{equation*}
\lim_{t \to \infty} e^{itH} J e^{-iS(t, D_{r}, \theta )} u
\end{equation*}
for all $\hat{u} \in C_{0}^{\infty}( (\mathbb{R} \setminus \{ 0\} ) \times U_{\lambda})$ for all $\lambda$.
For such $u$, we have
\begin{gather*}
\frac{1}{i} e^{-itH}\frac{\del}{\del t}[e^{itH}Je^{-iS(t, D_{r}, \theta)} u] 
=[HJ-J\frac{\del S}{\del t}(t, D_{r}, \theta)] e^{-i S(t, D_{r}, \theta)} u.
\end{gather*}
By the Cook-Kuroda method we only need to show that
\begin{equation*}
\| [HJ-J\frac{\del S}{\del t}(t, D_{r}, \theta )] e^{-i S(t, D_{r}, \theta) } u \|_{\mathcal{H}} \in L_{t}^{1}(1, \infty).
\end{equation*}
We decompose 
\begin{gather*}
[HJ-J\frac{\del S}{\del t}(t, D_{r}, \theta )]\\
=[P_{0}J - JP_{0}] + V_{S}J + [V^{L}J - JV^{L}]
+J[P_{0} + V^{L}(r) - \frac{\del S}{\del t}(t, D_{r}, \theta)].
\end{gather*}
The fisrt three terms are essentially short range terms. It is easy to check
\begin{gather}
\| [P_{0}J - JP_{0}] + V_{S}J + [V^{L}J - JV^{L}]  e^{-i S(t, D_{r}, \theta )} u \|_{\mathcal{H}} \in L_{t}^{1}(1, \infty). \label{short_range_terms}
\end{gather}
We examine the last term:
\begin{gather*}
[P_{0} + V^{L}(r) - (\del_{t}S)(t, D_{r}, \theta)]  e^{-i S(t, D_{r}, \theta )} u\\
= h^{r}(r, \theta, D_{r}, -\frac{\del S}{\del \theta}(t, D_{r}, \theta ) ) e^{-iS(t, D_{r}, \theta ) } u\\
- h((\del_{\rho}S)(t, D_{r}, \theta ), \theta, D_{r}, -\frac{\del S}{\del \theta}(t, D_{r}, \theta ) ) e^{-iS(t, D_{r}, \theta ) } u\\
+  \bigl( \frac{1}{r} a_{2}^{j}D_{r} + \frac{1}{2 r^{2}} a_{3}^{j k}\frac{\del S}{\del \theta^{ k}}(t, D_{r}, \theta )  \bigr) e^{-iS(t, \rho, \theta ) ) } (\del_{\theta^{j}} u)\\
-\frac{1}{2 r^{2}} a_{3}^{j k} e^{-iS(t, \rho, \theta ) ) } (\del_{\theta^{j}} \del_{\theta^{k}} u)\\
+ \text{\ [ short range terms ]}.
\end{gather*}
We apply the stationary phase method to the first two terms.
Then the first terms which appear in the asymptotic expansion will vanish since the relation
\begin{gather*}
(\del_{\rho}S)(t, \rho, \theta ) = r
\end{gather*}
gives the stationary point with respect to the $\rho$-variable.
Therefore we obtain
\begin{gather}
\| [P_{0} + V^{L}(r) - (\del_{t}S)(t, D_{r}, \theta)]  e^{-i S(t, D_{r}, \theta )} u\|_{\mathcal{H}} \in L_{t}^{1}(1, \infty).  \label{long_range_terms}
\end{gather}
We give a detailed proof of  \eqref{short_range_terms} and  \eqref{long_range_terms} in the remaining of this section.
\end{proof}

First we consider the long-range term  \eqref{long_range_terms}.
The next proposition is our key estimate.
\begin{Prop}\label{LRProp}
Assume the assumptions of Theorem \ref{Wave_MainThm}. Suppose $u$ satisfies $\hat{u} \in C_{0}^{\infty}( (\mathbb{R} \setminus \{ 0\} ) \times U_{\lambda})$ and $J \times U$ is  a neighborhood of supp $\hat{u}$.
Then we have
\begin{gather}
|[\tilde{h}(r, \theta, D_{r}, -\frac{\del S}{\del \theta}(t, D_{r}, \theta))
 - \tilde{h}(\frac{\del S}{\del \rho}(t, D_{r}, \theta), \theta, D_{r}, -\frac{\del S}{\del \theta}(t, D_{r}, \theta))] \notag \\
\cdot e^{-iS(t, D_{r}, \theta)} u(r, \theta)| \ 
\leq C t^{-\frac{1}{2} - 1 - \epsilon } \label{VLin}
\end{gather}
for $(\frac{r}{t}, \theta) \in J \times U \Subset(0, \infty)\times \del M$, and
\begin{gather}
|[\tilde{h}(r, \theta, D_{r}, -\frac{\del S}{\del \theta}(t, D_{r}, \theta))
 - \tilde{h}(\frac{\del S}{\del \rho}(t, D_{r}, \theta), \theta, D_{r}, -\frac{\del S}{\del \theta}(t, D_{r}, \theta))]\notag \\
\cdot e^{-iS(t, D_{r}, \theta)} u(r, \theta)| \ 
\leq C_{N} (1+|r|+|t|)^{-N} \label{VLnotin}
\end{gather}
for any $N$ and for $(\frac{r}{t}, \theta) \notin J \times U$.
\end{Prop}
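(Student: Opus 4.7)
The plan is to rewrite the left-hand sides of \eqref{VLin} and \eqref{VLnotin} as one-dimensional oscillatory integrals in $\rho$, and apply the stationary phase method (H\"{o}rmander~\cite{Ho83-85} Section~7.7). Since $e^{-iS(t,D_r,\theta)}$ is a Fourier multiplier in $r$ and $\hat u\in C_0^\infty((\mathbb{R}\setminus\{0\})\times U_\lambda)$, the object to estimate pointwise is
\begin{equation*}
I(t,r,\theta)=\frac{1}{2\pi}\int A(t,r,\rho,\theta)\, e^{i\phi(t,r,\rho,\theta)}\,\hat u(\rho,\theta)\,d\rho,
\end{equation*}
with $\phi(t,r,\rho,\theta)=r\rho-S(t,\rho,\theta)$ and amplitude
\begin{equation*}
A=\tilde h(r,\theta,\rho,-\del_\theta S)-\tilde h(\del_\rho S,\theta,\rho,-\del_\theta S).
\end{equation*}
From \eqref{estimates_for_SGlobal} one has $\del_\rho^2 S=t+\mathrm{o}(t)$, so $\phi$ has a unique nondegenerate $\rho$-stationary point $\rho^*$, determined by $r=\del_\rho S(t,\rho^*,\theta)$ and satisfying $\rho^*\simeq r/t$ to leading order.

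The decisive mechanism is that $A$ vanishes at $\rho=\rho^*$. Indeed, by the fundamental theorem of calculus,
\begin{equation*}
A=(r-\del_\rho S)\,B,\qquad B=\int_0^1(\del_r\tilde h)\bigl(\del_\rho S+s(r-\del_\rho S),\theta,\rho,-\del_\theta S\bigr)\,ds,
\end{equation*}
and since $\phi_\rho=r-\del_\rho S$ we have $A\,e^{i\phi}=-iB\,\del_\rho e^{i\phi}$. A single integration by parts in $\rho$ transforms the integral into
\begin{equation*}
I(t,r,\theta)=\frac{i}{2\pi}\int \del_\rho(B\hat u)\,e^{i\phi}\,d\rho,
\end{equation*}
and the stationary phase method will then be applied to this rewritten integral.

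In region (i), where $(r/t,\theta)\in J\times U$, I will combine three ingredients: the nondegeneracy $|\phi_{\rho\rho}|\sim t$ yields the standard stationary-phase gain $t^{-1/2}$; Assumption 1 on $a_1,a_2,a_3$ and the bound on $V^L$, together with $-\del_\theta S\in\mathrm{O}(t^{1-\epsilon})$ from \eqref{estimates_for_SGlobal}, give $B\in\mathrm{O}(t^{-1-\epsilon})$ in the regime $r\sim t$; and the same bookkeeping yields $\del_\rho(B\hat u)\in\mathrm{O}(t^{-1-\epsilon})$ (a $\del_\rho$ falling on $\del_\rho S$ inside $B$ costs a factor $t$, but this is absorbed by the extra $r^{-1}$ produced by $\del_r^2\tilde h$). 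Multiplying gives $|I|\le C\,t^{-1/2-1-\epsilon}$, which is \eqref{VLin}. To make the stationary-phase estimate uniform in $(t,r,\theta)$, I will localize to a small $\rho$-neighborhood of $\rho^*$ and use the straightening diffeomorphism of Lemma~\ref{psiLem} to put $\phi$ into quadratic normal form; the complementary region contributes $\mathrm{O}(t^{-N})$ via nonstationary integration by parts.

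In region (ii), where $(r/t,\theta)\notin J\times U$, there is no stationary point in the support of $\hat u(\cdot,\theta)$: either $\hat u(\cdot,\theta)\equiv 0$, or $|\phi_\rho|=|r-\del_\rho S|\gtrsim |r|+|t|$ throughout $\mathrm{supp}\,\hat u$. Repeated integration by parts using $e^{i\phi}=(i\phi_\rho)^{-1}\del_\rho e^{i\phi}$ then produces arbitrary polynomial decay in $1+|r|+|t|$, giving \eqref{VLnotin}. The principal technical obstacle is the uniformity of the stationary-phase expansion in region (i): every derivative of $\phi$, of $B$, and of the straightening map must be controlled simultaneously in $(t,r,\theta)$ by \eqref{estimates_for_SGlobal}, and the explicit construction of the normalizing diffeomorphism in Lemma~\ref{psiLem} is precisely what is needed to guarantee that the constants in the asymptotic expansion are bounded independently of $(t,r,\theta)$.
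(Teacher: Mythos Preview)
Your approach is correct and follows essentially the same route as the paper: localize near the unique stationary point, use Lemma~\ref{psiLem} to straighten the phase and guarantee uniform constants, apply stationary phase near the critical point and non-stationary phase elsewhere. The one genuine difference is how the extra factor $t^{-1}$ is extracted. The paper keeps the amplitude $A$ and applies stationary phase directly, observing that since $A(\rho^{*})=0$ the leading $t^{-1/2}$ term of the expansion vanishes, so only the $t^{-3/2}$ remainder (times $\|A\|_{C^{3}}\in\mathrm{O}(t^{-\epsilon})$) survives. You instead factor $A=\phi_{\rho}\,B$ via the fundamental theorem of calculus and integrate by parts once before invoking stationary phase, so that the new amplitude $\partial_{\rho}(B\hat u)$ already carries the size $t^{-1-\epsilon}$ and a single application of stationary phase gives $t^{-1/2}\cdot t^{-1-\epsilon}$. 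These are two equivalent bookkeepings of the same mechanism; your factorization makes the gain completely explicit, while the paper's version stays closer to the standard stationary-phase formalism. Both rely on Lemma~\ref{psiLem} in the same way for uniformity, and both treat the non-stationary region identically.
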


\begin{proof}[\bf{Proof of   \eqref{long_range_terms} } ]
We fix a neighborhood $J \times U$ of supp$\hat{u}$ which 
appears in Proposition \ref{LRProp}. Then
\begin{align*}
&\int_{1}^{\infty}
\|J|[\tilde{h}(r, \theta, D_{r}, -\frac{\del S}{\del \theta}(t, D_{r}, \theta))
 - \tilde{h}(\frac{\del S}{\del \rho}(t, D_{r}, \theta), \theta, D_{r}, -\frac{\del S}{\del \theta}(t, D_{r}, \theta))] \\
&e^{-iS(t, D_{r}, \theta)} u\| _{\mathcal{H}} dt\\
&=\int_{1}^{\infty}
\bigl(
\int_{\mathbb{R}_{+}\times \del M}|j(r)
[\tilde{h}(r, \theta, D_{r}, -\frac{\del S}{\del \theta}(t, D_{r}, \theta))\\
& - \tilde{h}(\frac{\del S}{\del \rho}(t, D_{r}, \theta), \theta, D_{r}, -\frac{\del S}{\del \theta}(t, D_{r}, \theta))] 
e^{-iS(t, D_{r}, \theta)} u(r, \theta)|^{2} dr H(\theta ) d\theta 
\bigr) ^{\frac{1}{2}}
dt\\
&\leq 
\int_{1}^{\infty}
\bigl(
\int_{\frac{r}{t} \in J}|j(r)
[\tilde{h}(r, \theta, D_{r}, -\frac{\del S}{\del \theta}(t, D_{r}, \theta))\\
& - \tilde{h}(\frac{\del S}{\del \rho}(t, D_{r}, \theta), \theta, D_{r}, -\frac{\del S}{\del \theta}(t, D_{r}, \theta))] 
e^{-iS(t, D_{r}, \theta)} u(r, \theta)|^{2} dr H(\theta ) d\theta 
\bigr) ^{\frac{1}{2}}\\
&+
\bigl(
\int_{\frac{r}{t} \notin J }|j(r)
[\tilde{h}(r, \theta, D_{r}, -\frac{\del S}{\del \theta}(t, D_{r}, \theta))\\
& - \tilde{h}(\frac{\del S}{\del \rho}(t, D_{r}, \theta), \theta, D_{r}, -\frac{\del S}{\del \theta}(t, D_{r}, \theta))] 
e^{-iS(t, D_{r}, \theta)} u(r, \theta)|^{2} dr H(\theta ) d\theta 
\bigr) ^{\frac{1}{2}}
dt .
\end{align*}
By  \eqref{VLin}, the first term is finite:
\begin{gather*}
\int_{1}^{\infty}
\bigl(
\int_{\frac{r}{t} \in J }|j(r)
[[\tilde{h}(r, \theta, D_{r}, -\frac{\del S}{\del \theta}(t, D_{r}, \theta))
 - \tilde{h}(\frac{\del S}{\del \rho}(t, D_{r}, \theta), \theta, D_{r}, -\frac{\del S}{\del \theta}(t, D_{r}, \theta))] \\
e^{-iS(t, D_{r}, \theta)} u(r, \theta)|^{2} dr H(\theta ) d\theta 
\bigr) ^{\frac{1}{2}}
dt\\
\leq
\int_{1}^{\infty}
\bigl(
\int_{R \in J }
|C t^{-\frac{1}{2}-1-\epsilon }|^{2}
t dR
\bigr) ^{\frac{1}{2}}
dt \ \ 
\leq
C \int_{1}^{\infty}t^{-1-\epsilon} dt\ \ 
< \infty .
\end{gather*}
By  \eqref{VLnotin}, the second term is also finite:
\begin{gather*}
\int_{1}^{\infty}
\bigl(
\int_{\frac{r}{t} \notin J }|j(r)
[[\tilde{h}(r, \theta, D_{r}, -\frac{\del S}{\del \theta}(t, D_{r}, \theta))
 - \tilde{h}(\frac{\del S}{\del \rho}(t, D_{r}, \theta), \theta, D_{r}, -\frac{\del S}{\del \theta}(t, D_{r}, \theta))] \\
e^{-iS(t, D_{r}, \theta)} u(r, \theta)|^{2} dr H(\theta ) d\theta 
\bigr) ^{\frac{1}{2}}
dt\\
\leq
\int_{1}^{\infty}
\bigl(
\int_{\frac{r}{t} \notin J }
C(1+|r|+|t|)^{-N}
dr 
\bigr) ^{\frac{1}{2}}
dt\ \ 
< \infty 
\end{gather*}
Therefore
\begin{gather*}
\|J[V^{L}(r, D_{r}, \theta) - V^{L}(\frac{\del W}{\del \rho}(D_{r}, \theta, t), D_{r}, \theta )]
e^{-iW(D_{r}, \theta, t)} u\| _{\mathcal{H}} \in L_{t}^{1}(1, \infty ).
\end{gather*}
\end{proof}

In order to prove Proposition \ref{LRProp}, we prepare a lemma.

\begin{Lem}\label{psiLem}
Let $S(t, \rho, \theta)$ satisfy the properties listed in Theorem  \ref{SThmGlobal}.
Set
\begin{equation*}
f_{r, \theta, t}(\rho ):= \frac{1}{t}(r\rho - S(t, \rho, \theta)).
\end{equation*}
For $\rho$ in any fixed compact subset of $\mathbb{R} \setminus \{ 0 \}$ and for  large enough $|t|$,
there exists a function $\Xi_{\theta , t}(r)$ 
which gives the critical point of $f_{r, \theta , t}(\rho )$:
\begin{equation*}
\del_{\rho}f_{r, \theta , t}(\rho ) = 0 
\iff
\rho = \Xi_{\theta , t}(r).
\end{equation*}
Set $\Omega_{d}:=(-d, d) $.
Then there exist $0 < \tilde{d} < d$ and a function $\phi_{r, \theta , t} \in C^{\infty}(\Omega_{d}; \mathbb{R})$
such that
$\Omega_{2\tilde{d}} \Subset \phi_{r, \theta ,t}(\Omega_{d})$.
Setting
\begin{gather*}
\psi_{r, \theta, t}(y) := \Xi_{\theta , t}(r) + \phi_{r, \theta ,t}(y),\\
(f_{r, \theta , t} \circ \psi_{r, \theta, t})(y) = 
f_{r, \theta , t} (\Xi_{\theta , t}(r)) + \langle A_{r, \theta , t}y, y\rangle /2,
\end{gather*}
where
\begin{equation*}
A_{r, \theta , t} = (\del_{\rho}^{2}f_{r, \theta , t})(\Xi_{\theta , t}(r)),
\end{equation*}
 we have
\begin{gather}
|\del_{y}^{k}\psi_{r, \theta , t}(0)| \leq C t^{-\epsilon} \ \ \ (k\geq 2),  \ \ 
\del_{y}\psi_{r, \theta , t}(0) = 1.\label{psi}\end{gather}
\end{Lem}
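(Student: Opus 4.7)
The plan is to view $f_{r,\theta,t}$ as a $C^\infty$-small perturbation of the quadratic model $g(\rho):=\frac{r\rho}{t}-\frac{1}{2}\rho^2$ and then apply a quantitative version of the Morse normal form. The starting point is Theorem \ref{SThmGlobal}: the estimate \eqref{estimates_for_SGlobal} gives
\begin{equation*}
\partial_\rho^j\partial_\theta^\alpha\bigl(f_{r,\theta,t}(\rho)-g(\rho)\bigr)
=-\frac{1}{t}\partial_\rho^j\partial_\theta^\alpha\bigl(S(t,\rho,\theta)-\tfrac{1}{2}t\rho^2\bigr)
\in \mathrm{O}(\langle t\rangle^{-\epsilon}),
\end{equation*}
uniformly for $\rho$ in any fixed compact subset of $\mathbb{R}\setminus\{0\}$ and for $\theta\in\partial M$. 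In particular $\partial_\rho^2 f_{r,\theta,t}=-1+\mathrm{O}(t^{-\epsilon})$, which is bounded away from $0$ for all sufficiently large $|t|$.

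First I would establish $\Xi_{\theta,t}(r)$. The critical-point equation $\partial_\rho f_{r,\theta,t}(\rho)=0$ reads $r/t=\rho+\mathrm{O}(t^{-\epsilon})$ in every $C^k$-norm, so the implicit function theorem (using that $\partial_\rho^2 f$ is uniformly bounded away from zero) produces a smooth function $\Xi_{\theta,t}(r)$ defined for $r/t$ in any fixed compact subset of $\mathbb{R}\setminus\{0\}$, with $\Xi_{\theta,t}(r)=r/t+\mathrm{O}(t^{-\epsilon})$ and all derivatives similarly controlled. Setting $A:=A_{r,\theta,t}=\partial_\rho^2 f_{r,\theta,t}(\Xi_{\theta,t}(r))$, we have $A=-1+\mathrm{O}(t^{-\epsilon})$, hence $|A|\geq 1/2$ for large $|t|$.

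Next I would perform the Morse reduction around $\Xi_{\theta,t}(r)$. The second-order Taylor formula gives
\begin{equation*}
f_{r,\theta,t}(\Xi_{\theta,t}(r)+h)-f_{r,\theta,t}(\Xi_{\theta,t}(r))=\tfrac{1}{2}h^{2}G(h),\qquad
G(h):=2\int_{0}^{1}(1-s)\,\partial_\rho^{2}f_{r,\theta,t}(\Xi_{\theta,t}(r)+sh)\,ds,
\end{equation*}
with $G(0)=A$ and $G(h)=-1+\mathrm{O}(t^{-\epsilon})$ in every $C^{k}$-norm on a fixed neighborhood $|h|<d_{0}$. The identity we want, $(f_{r,\theta,t}\circ\psi_{r,\theta,t})(y)-f_{r,\theta,t}(\Xi_{\theta,t}(r))=\tfrac{1}{2}Ay^{2}$, becomes $\phi(y)^{2}G(\phi(y))=Ay^{2}$, which I would solve in the form $\phi(y)=y\sqrt{A/G(\phi(y))}$ (the ratio $A/G$ is close to $1$, so its square root is real and close to $1$). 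A Banach fixed-point argument on a small disk yields a smooth solution $\phi=\phi_{r,\theta,t}$ with $\phi(0)=0$ and $\phi'(0)=\sqrt{A/G(0)}=1$; uniform control on the contraction constant provides a $\tilde d>0$, independent of $(r,\theta,t)$, with $\Omega_{2\tilde d}\Subset\phi(\Omega_d)$. Setting $\psi_{r,\theta,t}(y)=\Xi_{\theta,t}(r)+\phi(y)$ completes the normal form.

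Finally, the derivative estimates \eqref{psi} come from comparing with the exact model case $S=\frac{1}{2}t\rho^{2}$, in which $G\equiv-1$, $A=-1$, and $\phi(y)\equiv y$ solves the equation identically. In the perturbed situation, $G(h)+1$ and its $h$-derivatives are all $\mathrm{O}(t^{-\epsilon})$. Differentiating the equation $\phi(y)^{2}G(\phi(y))=Ay^{2}$ repeatedly at $y=0$, using $\phi(0)=0$ and $\phi'(0)=1$, each $\partial_y^k\phi(0)$ for $k\geq 2$ is expressed as a polynomial in the lower-order derivatives $\partial_y^j\phi(0)$ ($j<k$) and in the values of $\partial_h^\ell G(0)$; since all these coefficients contribute factors $\mathrm{O}(t^{-\epsilon})$ in the perturbed case (and would produce $0$ in the unperturbed case), an induction on $k$ yields $\partial_y^k\psi_{r,\theta,t}(0)=\partial_y^k\phi(0)=\mathrm{O}(t^{-\epsilon})$ for $k\geq 2$. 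The main obstacle is precisely this last bookkeeping step: one must track that every term in the Faà di Bruno expansion of $\partial_y^k\phi(0)$ contains at least one factor of the form $\partial_h^\ell(G+1)(0)$ or $\partial_y^j\phi(0)$ with $2\leq j<k$, so that the whole expression is $\mathrm{O}(t^{-\epsilon})$.
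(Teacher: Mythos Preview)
Your proposal is correct and follows essentially the same route as the paper: both use the Taylor integral remainder $G(h)=2\int_0^1(1-s)\,\partial_\rho^2 f_{r,\theta,t}(\Xi+sh)\,ds$ (the paper calls it $B$) together with the estimate \eqref{estimates_for_SGlobal} to run a quantitative Morse lemma, and both derive \eqref{psi} from the fact that all $h$-derivatives of $G$ are $\mathrm{O}(t^{-\epsilon})$. The only cosmetic difference is that the paper writes down the \emph{forward} change of variables $X_{r,\theta,t}(\rho)=\rho\sqrt{B(\rho)/A}$ explicitly and defines $\phi$ as its inverse, whereas you solve directly for $\phi$ via the fixed-point equation $\phi(y)=y\sqrt{A/G(\phi(y))}$; these are two presentations of the same construction, and the derivative bookkeeping (your Fa\`a di Bruno induction versus the paper's chain-rule estimates on $X$ followed by inversion) leads to the same bounds.
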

\begin{proof}

We only consider the $t  > 0$ and $ \rho > 0$ case.
First we prove that $\Xi_{\theta, t}(r)$ is well-defined. Compute
\begin{gather*}
0 = \del_{\rho }f_{r, \theta , t}(\rho ) \ \ 
  = \frac{1}{t} \big[ r - \frac{\del S}{\del \rho }(t, \rho , \theta ) \big]
\end{gather*}
We note that by \eqref{estimates_for_SGlobal},
\begin{align*}
|\frac{1}{t} \frac{\del^{2} S}{\del \rho ^{2}}(t, \rho , \theta) - 1| \leq C t^{-\epsilon }.
\end{align*}
This implies that $\frac{1}{t} \frac{\del S}{\del \rho}$ is monotonously increasing 
with respect to
$\rho$ for large enough $t$.
Thus there is a unique inverse function $\Xi_{\theta , t}(r)$ such that 
\begin{equation*}
(\del_{\rho}f_{r, \theta , t})(\Xi_{\theta , t}(r)) =0
\end{equation*}
for large enough $t$ and $\frac{r}{t} \in J$, a  fixed  compact subset of $(0, \infty)$.

Now we construct $\phi_{r, \theta , t}$ and $\psi_{r, \theta , t}$.
We set
\begin{gather*}
A_{r, \theta, t}
:= f_{r, \theta , t}^{\prime \prime}(\Xi_{\theta , t}(r)) \ 
= - \frac{1}{t}\frac{\del^{2} S}{\del \rho ^{2}}(t, \Xi_{\theta , t}(r), \theta).
\end{gather*}
Then  \eqref{estimates_for_SGlobal} implies that
\begin{equation*}
|A_{r, \theta , t}+1| \leq C t^{-\epsilon}.
\end{equation*}
Hence we have $A_{r, \theta , t} \to -1$ uniformly for $r/t \in J$.
If we set
\begin{equation*}
g_{r, \theta , t}(\rho ):= f_{r \theta , t}(\Xi_{\theta , t}(r) + \rho ),
\end{equation*}
then 
\begin{gather*}
g_{r, \theta , t}^{\prime}(0 ):= f_{r \theta , t}^{\prime}(\Xi_{\theta , t}(r)) = 0, \ \ 
g_{r, \theta , t}^{\prime \prime}(0 ):= f_{r \theta , t}^{\prime \prime}(\Xi_{\theta , t}(r)) = A_{r, \theta, t}, \\
g_{r, \theta , t}(\rho )-g_{r, \theta , t}(0) = \langle B_{r, \theta , t}(\rho ) \rho , \rho \rangle /2,
\end{gather*}
where
\begin{gather*}
B_{r, \theta , t}(\rho ) := 2\int_{0}^{1}g_{r, \theta , t}(s\rho )(1-s)ds, \ \ 
B_{r, \theta , t}(0) = A_{r, \theta , t}
\end{gather*}
by Taylor's formula.
Now we compute
\begin{align*}
|B_{r, \theta , t}(\rho ) - A_{r, \theta , t}|\ 
&= |B_{r, \theta , t}(\rho ) - B_{r, \theta , t}(0)|\ 
= 2|\int_{0}^{1}(g_{r, \theta , t}^{\prime \prime}(s\rho ) - g_{r, \theta , t}^{\prime \prime }(0) )(1-s) ds|\\
&\leq 2 \sup_{0 \leq s \leq 1} |g_{r, \theta , t}^{\prime \prime}(s\rho ) - g_{r, \theta , t}^{\prime \prime}(0)|\\
&\leq 2 \sup_{0 \leq s \leq 1} 
|\frac{1}{t} \frac{\del ^{2} S}{\del \rho ^{2} } (t, \Xi _{\theta , t}(r) + s \rho , \theta)
-\frac{1}{t} \frac{\del ^{2} S}{\del \rho ^{2} } (t, \Xi _{\theta , t}(r), \theta)| \\
&\leq C t^{-\epsilon}
\to 0
\end{align*}
as $t \to \infty$, uniformly for $\frac{r}{t}, \rho \in J$ by  \eqref{estimates_for_SGlobal}.
Hence by taking $t$ sufficiently large, we may assume
$\big| \frac{B_{r, \theta , t}(\rho )}{A_{r, \theta , t}(\rho )} - 1 \big| < 1/2$ is uniformly very small.
For such $t, \frac{r}{t}, $and $\rho$, we set
\begin{equation*}
X_{r, \theta , t}(\rho ) := \sqrt{\frac{B_{r, \theta , t}(\rho )}{A_{r, \theta , t}}} \cdot \rho.
\end{equation*}
Then we have
\begin{equation*}
g_{r, \theta , t}(\rho) - g_{r, \theta ,t}(0) 
= \langle A_{r, \theta , t} X_{r, \theta , t}(\rho ), X_{r, \theta , t}(\rho )\rangle/2.
\end{equation*}
Now we compute
\begin{align*}
(\del_{\rho }X_{t, \theta , t})(\rho )
&= \big( \sqrt{\frac{B_{r, \theta , t}(\rho )}{A_{r, \theta , t}}} \big) ^{\prime } \cdot \rho
+\sqrt{\frac{B_{r, \theta , t}(\rho )}{A_{r, \theta , t}}} \cdot 1\\
&= \frac{1}{\sqrt{A_{r, \theta , t}}}2\sqrt{B_{r, \theta , t}(\rho )}
\cdot B_{r, \theta , t}^{\prime}(\rho )\cdot \rho
+ \sqrt{\frac{B_{r, \theta , t}(\rho )}{A_{r, \theta , t}}} \cdot 1,\\
(\del_{\rho}B_{r, \theta , t})(\rho ) 
&= 2 \int_{0}^{1}g_{r, \theta , t}^{\prime \prime \prime}(s \rho )s(1-s) ds,\\
|g_{r, \theta, t}^{\prime \prime \prime}(s \rho )| 
&= |-\frac{1}{t}(\del_{\rho}^{3}S)(t, \Xi_{\theta , t}(r) + s\rho , \theta)|\leq C t^{-\epsilon},\\
|\del_{\rho} X_{r, \theta , t}(\rho) - 1| &\leq C t^{-\epsilon}.
\end{align*}
Hence for small enough $d_{0} >0$ and for $|\rho|  \leq d_{0}$,
we have
$|\del_{\rho} X_{r, \theta , t}(\rho) - 1|$
arbitrary small for all large enough $t$,
and
$
X_{r, \theta , t} : \Omega_{d_{0}} \to X_{r, \theta , t} (\Omega_{d_{0}})
$
is a $C^{\infty}$-diffeomorphism.
We can pick $d>0$ such that,
$
\Omega_{d} \subset X_{r, \theta , t} (\Omega_{d_{0}})
$
for all $r, \theta ,$ large enough $t, \frac{r}{t} \in J$.
Let $\phi_{r, \theta , t}$ be the inverse map of $X_{r, \theta , t}$
with domain $\Omega_{d}$.
Then we can also pick $\tilde{d} >  0$ such that
$\Omega_{\tilde{d}} \subset \phi_{r, \theta , t}(\Omega_{d})$
for all $r, \theta,$ large enough $t, \frac{r}{t} \in J$. 
We note that
\begin{align*}
g_{r, \theta , t} \circ \phi_{r, \theta , t} (y) -g_{r, \theta , t}(0)
&= \langle A_{r, \theta , t} X_{r, \theta , t} \circ \phi_{r, \theta , t}(y ),
X_{r, \theta , t} \circ \phi_{r, \theta , t}(y ) \rangle /2.\\
&= \langle A_{r, \theta , t} y, y\rangle/2.
\end{align*}
We set
\begin{equation*}
\psi_{r, \theta ,t}(y) = \phi_{r, \theta , t}(y) +\Xi_{\theta ,t}(r).
\end{equation*}
Then we have
\begin{equation*}
f_{r, \theta , t}\circ \psi_{r, \theta , t}(y)
= f_{r, \theta ,t}(\Xi_{\theta ,t}(r)) + \langle A_{r, \theta , t} y, y \rangle/2.
\end{equation*}

Lastly, we prove the estimates $ \eqref{psi}$.
For $k \geq 1$,
\begin{align*}
|\del_{\rho}^{k}B_{r, \theta , t}(\rho)|
&= 2|\int_{0}^{1}g_{r, \theta , t}^{2+k}(s \rho ) s^{k}(1-s) ds |
\leq 2 \sup |g_{r, \theta , t}^{2+k}(s\rho) |\\
&\leq 2 \sup |\frac{1}{t}\del_{\rho}^{2+k}S(t, \Xi_{\theta , t}(r) +s\rho, \theta)|
\leq C t^{-\epsilon}
\end{align*}
by  \eqref{estimates_for_SGlobal}.
We also have 
\begin{equation*}
|\del_{\rho}^{k}\sqrt{B_{r, \theta , t}}(\rho)| \leq C t^{-\epsilon}.
\end{equation*}
Therefore
\begin{gather*}
|\del_{\rho}^{k}X_{r, \theta ,t}(\rho )| \leq C t^{-\epsilon} \ \ \ \ (k\geq 2) ,\ \ 
|\del_{\rho}X_{r, \theta , t}(0) - 1|= C t^{-\epsilon},
\end{gather*}
and we have
\begin{gather*}
|\del_{y}^{k} \psi_{r, \theta , t}(y)|=|\del_{y}^{k}\phi_{r, \theta , t}(y)| 
\leq C t^{-\epsilon}\ \ \ \  (k\geq 2), \\
|\del_{y}\psi_{r, \theta , t}(0) - 1| = |\del_{y}\phi_{r, \theta , t}(0) - 1| \leq C t^{- \epsilon}.
\end{gather*}
Then we complete the proof of Lemma  \ref{psiLem}.

\end{proof}

\begin{proof}[\bf{Proof of Proposition  \ref{LRProp}.} ]

First we prove  \eqref{VLin}
for $\frac{r}{t} \in J$.
We fix $\chi \in C_{0}^{\infty}(\mathbb{R})$
such that
$\chi(x) = 1 $ if $|x| \leq \frac{1}{2}$, and
$\chi(x) = 0 $ if $|x| \geq 1$.
We split $u$ into two terms depending on $r, \theta ,$ and $ t$:
\begin{gather*}
\hat{u}_{r, \theta , t}^{c}(\rho , \theta )
= \hat{u}(\rho , \theta ) \chi \big( \frac{\rho - \Xi_{\theta , t}(r)}{\tilde{d}} \big),\\
\hat{u}_{r, \theta , t}^{d}(\rho , \theta )
= \hat{u}(\rho , \theta ) \big[ 1 - \chi \big( \frac{\rho - \Xi_{\theta , t}(r)}{\tilde{d}} \big) \big]
\end{gather*}
where we use notations defined in Lemma  \ref{psiLem}.
The support of $\hat{u}_{r, \theta , t}^{c}$ is  close to the critical point of
$r\rho - S(t, \rho, \theta)$, while
that of $\hat{u}^{d}_{r, \theta, t}$ is apart from it.
Note that
\begin{equation*}
\text{supp} \hat{u}_{r, \theta , t}^{c}
\subset \Xi_{\theta, t}(r) + \Omega_{\tilde{d}}
\subset \text{Ran}(\psi_{r, \theta , t}).
\end{equation*}
By a change of variables we have
\begin{align*}
 &\tilde{h}(\frac{\del S}{\del \rho}(t, D_{r}, \theta), \theta, D_{r}, -\frac{\del S}{\del \theta}(t, D_{r}, \theta))
e^{-iS(t, D_{r}, \theta)} u_{r, \theta , t}^{c}(r, \theta) \notag \\
= &\frac{1}{2\pi}
\int_{\mathbb{R}} 
\tilde{h}(\frac{\del S}{\del \rho}(t, \rho, \theta), \theta, \rho, -\frac{\del S}{\del \theta}(t, \rho, \theta))
e^{ir \rho -iS(t, \rho, \theta)} \hat{u}_{r, \theta , t}^{c}(\rho, \theta )d\rho\\
= &\frac{1}{2\pi}
\int_{\Omega_{d}}
\tilde{h}(\frac{\del S}{\del \rho}(t, \psi_{r, \theta , t}(y), \theta), \theta, \psi_{r, \theta , t}(y), -\frac{\del S}{\del \theta}(t, \psi_{r, \theta , t}(y), \theta))
\hat{u}_{r, \theta , t}^{c}(\psi_{r, \theta , t}(y), \theta )\\
 &\cdot J_{r, \theta, t}(y)
e^{itf_{r, \theta, t}(\Xi_{\theta, t}(r))} e^{it \langle A_{r, \theta , t}y, y\rangle /2} dy
\end{align*}
where $J_{r, \theta, t}(y) = |\psi_{r, \theta , t}^{\prime}(y)|$
is the Jacobian.
Since
\begin{gather*}
|D_{\rho}^{j}\tilde{h}(\frac{\del S}{\del \rho}(t, \rho, \theta), \theta, \rho, -\frac{\del S}{\del \theta}(t, \rho, \theta))
| \leq C t^{-\epsilon},
\end{gather*}
we have
\begin{gather*}
|D_{y}^{j}\tilde{h}(\frac{\del S}{\del \rho}(t, \psi_{r, \theta , t}(y), \theta), \theta, \psi_{r, \theta , t}(y), -\frac{\del S}{\del \theta}(t, \psi_{r, \theta , t}(y), \theta))
|
\leq C t^{-\epsilon},\\
|\tilde{h}(r, \theta, \psi_{r, \theta , t}(y), -\frac{\del S}{\del \theta}(t, \psi_{r, \theta , t}(y), \theta))
|
\leq C t^{-\epsilon},\\
|D_{y}^{j}\hat{u}_{r, \theta , t}^{c}(\psi_{r, \theta , t}(y), \theta )|
\leq C,\ \\
|D_{y}^{j}J_{r, \theta, t}(y)|
\leq C,
\end{gather*}
for $y \in \Omega_{d}$, $\frac{r}{t} \in J$.
Now we apply the stationary phase method (see H\"{o}rmander~\cite{Ho83-85} Section 7.7) to the integral.
In the asymptotic expansion of
\begin{align*}
&[\tilde{h}(r, \theta, D_{r}, -\frac{\del S}{\del \theta}(t, D_{r}, \theta))
- \tilde{h}(\frac{\del S}{\del \rho}(t, D_{r}, \theta), \theta, D_{r}, -\frac{\del S}{\del \theta}(t, D_{r}, \theta))
]\\
&\cdot e^{-iS(t, D_{r}, \theta)} u_{r, \theta , t}^{c}(r, \theta)\\
=& \frac{1}{2\pi}
\int_{\Omega_{d}}
[\tilde{h}(r, \theta, \psi_{r, \theta , t}(y), -\frac{\del S}{\del \theta}(t, \psi_{r, \theta , t}(y), \theta)) \\
&- \tilde{h}(\frac{\del S}{\del \rho}(t, \psi_{r, \theta , t}(y), \theta), \theta, \psi_{r, \theta , t}(y), -\frac{\del S}{\del \theta}(t, \psi_{r, \theta , t}(y), \theta))
] \cdot \\
&\hat{u}_{r, \theta , t}^{c}(\psi_{r, \theta , t}(y), \theta ) \cdot J_{r, \theta, t}(y)
e^{itf_{r, \theta, t}(\Xi_{\theta, t}(r))} e^{it \langle A_{r, \theta , t}y, y \rangle /2} dy,
\end{align*}
the terms in which $\tilde{h}$ is not differentiated will vanish
since
\begin{equation*}
\frac{\del S}{\del \rho}(t, \psi_{r, \theta, t}(0), \theta) = r.
\end{equation*}
Especially, in the first step of the asymptotic expansion,
we need to estimate only the remainder terms.
Therefore we have
\begin{align*}
&\big(
[\tilde{h}(r, \theta, D_{r}, -\frac{\del S}{\del \theta}(t, D_{r}, \theta))
- \tilde{h}(\frac{\del S}{\del \rho}(t, D_{r}, \theta), \theta, D_{r}, -\frac{\del S}{\del \theta}(t, D_{r}, \theta))
]\\
&\cdot e^{-iS(t, D_{r}, \theta)} u_{r, \theta , t}^{c}(r, \theta)\\
&\leq
C t^{-\frac{1}{2} - 1}
\sum_{|k|\leq 3} \sup ||D_{y}^{k}
[\tilde{h}(r, \theta, \psi_{r, \theta , t}(y), -\frac{\del S}{\del \theta}(t, \psi_{r, \theta , t}(y), \theta)) \\
&- \tilde{h}(\frac{\del S}{\del \rho}(t, \psi_{r, \theta , t}(y), \theta), \theta, \psi_{r, \theta , t}(y), -\frac{\del S}{\del \theta}(t, \psi_{r, \theta , t}(y), \theta))
] \cdot \\
&\hat{u}_{r, \theta , t}^{c}(\psi_{r, \theta , t}(y), \theta ) \cdot J_{r, \theta, t}(y)
||_{L^{2}}\\
&\leq C t^{ - \frac{1}{2} - 1 - \epsilon}.
\end{align*}

We now consider $u_{r, \theta, t}^{d}$  term.
\begin{align*}
 &\tilde{h}(r, \theta, D_{r}, -\frac{\del S}{\del \theta}(t, D_{r}, \theta))
e^{-iS(t, D_{r}, \theta)} u_{r, \theta , t}^{d}(r, \theta) \\
= &\frac{1}{2\pi}
\big(
\int_{-\infty} ^{\Xi_{\theta, t}(r) - \frac{1}{2}\tilde{d}}
+\int_{\Xi_{\theta, t}(r) + \frac{1}{2}\tilde{d}}^{\infty}   \big)  \\
& \tilde{h}(\frac{\del S}{\del \rho}(t, \rho, \theta), \theta, \rho, -\frac{\del S}{\del \theta}(t, \rho, \theta))
e^{ir\rho -iS(t, \rho, \theta)} \hat{u}_{r, \theta , t}^{d}(\rho, \theta )d\rho
\end{align*}
We consider integration over $\geq \Xi_{\theta, t}(r) + \frac{1}{2}\tilde{d}$ only
(the other part is similar to prove).
 \eqref{estimates_for_SGlobal} implies 
\begin{gather*}
\del_{\rho} f_{r, \theta, t}(\rho) \leq - C < 0,\\
|\del_{\rho}^{j} f_{r, \theta, t}(\rho)| \leq  C t^{-\epsilon} , \  j \geq 2
\end{gather*}
in this region.
Let $y \mapsto h_{r, \theta,t}(y)$ be the inverse of $\rho \mapsto f_{r, \theta, t}(\rho)$.
Then
\begin{gather*}
|\del_{y} h_{r, \theta, t}(y)| \leq C,\\
|\del_{y}^{j} h_{r, \theta, t}(y)| \leq  C t^{-\epsilon} , \ j \geq 2.
\end{gather*}
By a change of the variables we obtain
\begin{align*}
&|\int_{\Xi_{\theta, t}(r) + \frac{1}{2}\tilde{d}}^{\infty}
\tilde{h}(\frac{\del S}{\del \rho}(t, \rho, \theta), \theta, \rho, -\frac{\del S}{\del \theta}(t, \rho, \theta))
e^{it f_{r, \theta, t}(\rho)} \hat{u}_{r, \theta , t}^{d}(\rho, \theta )d\rho | \\
&=
\big|\int e^{it y} 
\tilde{h}(\frac{\del S}{\del \rho}(t, h_{r, \theta, t}(y), \theta), \theta, h_{r, \theta, t}(y), -\frac{\del S}{\del \theta}(t, h_{r, \theta, t}(y), \theta))
\hat{u}_{r, \theta , t}^{d}(h_{r, \theta, t}(y), \theta )\cdot\\
&|h_{r, \theta, t}^{\prime}(y) \big|
dy | \\
&\leq C t^{-N}.
\end{align*}
We can show the same kind of estimations for\\
$\tilde{h}(r, \theta, D_{r}, -\frac{\del S}{\del \theta}(t, D_{r}, \theta))e^{-iS(t, D_{r}, \theta)} u_{r, \theta , t}^{d}(r, \theta)$.
We have ended the proof of  \eqref{VLin}.

Now we show  \eqref{VLnotin}.
 \eqref{estimates_for_SGlobal} implies that there exists $\tilde{J}$ such that
$\frac{1}{t}\frac{\del S}{\del \rho} \in  \tilde{J} \Subset J$
for large enough $t$.
Thus the absolute value of the derivative of\\ $\rho \mapsto (r\rho - S(t, \rho, \theta))/(|r|+|t|)$
is bounded below  for $\frac{r}{t} \notin J$,
large enough $t$, and $\rho \in$ supp$\hat{u}$.
Thus we can apply the non-stationary phase method and obtain  \eqref{VLnotin}.
\end{proof}

\begin{proof}[\bf{Proof of  \eqref{short_range_terms}}, partial isometry, and intertwining property.]

First we consider the\\
short-range terms.
On $\tilde{M_{0}}$, 
\begin{align*}
P_{0}J - JP_{0} + V^{L}J-JV^{L}
&=O(r^{-\frac{n-1}{2}}r^{-1-\epsilon})\del_{r} \del_{\theta }
+O(r^{-\frac{n-1}{2}}r^{-2})\del_{\theta}^2\\
&+\sum_{j} O(r^{-\frac{n-1}{2}}r^{-1-\epsilon})\del_{r}^j.
\end{align*}
These terms can be treated as a short range perturbation of $(1-\epsilon) = \mu_{S}$ type.
Hence on  $\tilde{M_{0}}$,
$P_{0}J - JP_{0} + V^{L}J-JV^{L} +V^{S}J$ is a finite sum of terms of the form
$v_{j, \alpha}(r, \theta ) r^{-\frac{n-1}{2}} D_{r}^{j}\del_{\theta}^{\alpha}$
where $ v_{j, \alpha} $ satisfy
\begin{gather*}
\int_{\mathbb{R}_{+} \times U_{\lambda}}
|v_{j, \alpha}^{S}(x)|^{2}\langle x \rangle^{-M} G(x)dx	< \infty ,\\
\int_{1}^{\infty}(\int_{(\rho, \theta) \in J \times U}|v_{j, \alpha}^{S}(t\rho, \theta)|^{2}d\rho d\theta ) ^{1/2} 
t^{(1- \epsilon) |\alpha|}dt < \infty,
\end{gather*}
for some neighborhood $J \times U$
of almost every $(\rho_{0}, \theta_{0}) \in \mathbb{R}\times \partial M $.
We assume supp $\hat{u} \subset J \times U$.

We consider the differential operators with respect to $\theta$-variable.
$\del_{\theta}e^{-iS(t, D_{r}, \theta)}$ yields\\
 $(\del_{\theta}S)(t, D_{r}, \theta)$ terms
which increase as $t^{1-\epsilon}$.
Hence, as in the long-range case, the inequalities  \eqref{estimates_for_SGlobal} implies
\begin{gather}
|D_{r}^{j}\del_{\theta}^{\alpha} e^{-iS(t, D_{r}, \theta)} u(r, \theta )|   
=
\frac{1}{2\pi}|\int_{\mathbb{R}}
\del_{\theta}^{\alpha} [e^{i(r \rho  - S(t, \rho, \theta) )}
\rho^{j}\hat{u}(\rho , \theta )] d\rho |    
\leq C t^{-\frac{1}{2} + |\alpha|(1- \epsilon)} \label{SRin}
\end{gather}
for $(\frac{r}{t}, \theta) \in J \times U$,
and  
\begin{gather}
|\del_{r}^{j}\del_{\theta}^{\alpha} e^{-iS(t, D_{r}, \theta)} u(r, \theta )|
\leq C_{N} (1+|r|+|t|)^{-N} \label{SRnotin}
\end{gather}
for any $N$ for $(\frac{r}{t}, \theta) \notin J \times U$.
Thus we obtain for such $v_{j, \alpha} $
\begin{equation*}
\| v_{j, \alpha}(r, \theta ) r^{-\frac{n-1}{2}} D_{r}^{j}\del_{\theta}^{\alpha}
e^{-iS(t, D_{r}, \theta)} u\| _{\mathcal{H}}
\in L_{t}^{1}(1, \infty ),
\end{equation*}
which proves
\begin{equation*}
\|\big( [P_{0}J - JP_{f}]
+V^{S}J +[V^{L}J - JV^{L}] \big)
e^{-iS(t, D_{r}, \theta)} u\| _{\mathcal{H}}
\in L_{t}^{1}(1, \infty ).
\end{equation*}
We have proved the existence of the modified wave operators.

 \eqref{VLin}, \eqref{VLnotin}, \eqref{SRin}, and  \eqref{SRnotin} also show that 
$W_{\pm}$ are partial isometries from $\mathcal{H}_{f}$ into $\mathcal{H}$.

The intertwining property follows from
\begin{equation}
\s-lim_{t \to \infty} (e^{-iS(t+s, D_{r}, \theta)} - e^{-is(D_{r}, \theta, t)} e^{-isP_{f}}) = 0
\end{equation}
which can be proved using   \eqref{estimates_for_SGlobal} and the dominated convergence theorem.
The proof of the theorem is complete.
\end{proof}

\end{document}